\newtheorem{theo}{Theorem}[section]
\newtheorem{cor}[theo]{Corollary}
\newtheorem{lem}[theo]{Lemma}
\newtheorem{prop}[theo]{Proposition}
\newtheorem{defn}[theo]{Definition}
\newtheorem{rmk}[theo]{Remark}
\newtheorem{ex}[theo]{Example}
\newcommand {\GG}{\mathcal{G}}
\newcommand{\Z}{\mathbb{Z}}
\newcommand{\N}{\mathbb{N}}
\newcommand{\s}{\sigma}
\newcommand{\A}{\mathbf{A}}
\newcommand{\G}{\mathcal{G}}
\newcommand{\h}{\mathcal{H}}
\title{Continuous shift commuting maps between ultragraph shift spaces}
\author{
\small{Daniel Gon\c{c}alves}\\
\footnotesize{UFSC -- Department of Mathematics}\\
\footnotesize{88040-900 Florian\'{o}polis - SC, Brazil}\\
\footnotesize{\texttt{daemig@gmail.com}}
\and
\small{Marcelo Sobottka}\\
\footnotesize{UFSC -- Department of Mathematics}\\
\footnotesize{88040-900 Florian\'{o}polis - SC, Brazil}\\
\footnotesize{\texttt{marcelo.sobottka@ufsc.br}}
}
\date{}
\begin{document}

\maketitle

\begin{abstract} Recently a generalization of shifts of finite type to the infinite alphabet case was proposed, in connection with the theory of ultragraph C*-algebras. In this work we characterize the class of continuous shift commuting maps between these spaces. In particular, we prove a Curtis-Hedlund-Lyndon type theorem and use it to completely characterize continuous, shift commuting, length preserving maps in terms of generalized sliding block codes.
\end{abstract}

\vspace{1.0pc}
MSC 2010: 37B10, 54H20, 37B15

\vspace{1.0pc}
Keywords: Symbolic dynamics, ultragraph edge shift spaces, infinite alphabets, Curtis-Hedlund-Lyndon theorem, generalized sliding block codes.

\bigskip
\hrule
\noindent
{\footnotesize\em This is a pre-copy-editing, author-produced PDF of an article accepted for publication in DCDS-A, following peer review. The definitive publisher-authenticated version {\em D. Gon\c{c}alves and  M. Sobottka. Continuous shift commuting maps between ultragraph shift spaces. Disc. and Cont. Dynamic. Systems (2019), 39, 2, 1033-1048; doi:10.3934/dcds.2019043 }, is available online at: http://aimsciences.org//article/doi/10.3934/dcds.2019043.}
\hrule
\bigskip
% ---------------------------- INTRODUCTION ------------------------------------

\section{Introduction}

The generalization of the idea of a subshift of finite type to the case of a countable alphabet, called a countable-state topological Markov chain, is a natural one to make and comes up in various contexts, including problems in magnetic recording, see \cite{Petersen}. Countable-state topological Markov chains have also been studied in papers like \cite{Fiebig2001, Fiebig2003, FiebigFiebig1995, FiebigFiebig2005, kitchens1997}, to mention a few. Although the subject of intense research, the development of results for infinite alphabet shift spaces, that parallel the symbolic dynamics of shifts over finite alphabets, has challenged researchers over the years. The lack of compactness (or local compactness) in the spaces considered account to many results in usual symbolic dynamics failing. For example, it is shown in \cite{Petersen, Sal} that for shifts with a countable alphabet (defined via product topology) the entropy of a factor may increase.

In \cite{Ott_et_Al2014} Ott, Tomforde and Willis proposed a definition of a compact shift space that is related to C*-algebra theory. Building from these ideas, and on work of Webster (see \cite{Webster}), a generalization of shifts of finite type to the infinite alphabet case was proposed recently in \cite{GRISU}. The construction proposed in \cite{GRISU} takes the shift space as the boundary path space of an ultragraph (ultragraphs are combinatorial objects that generalize direct graphs). The idea is that the boundary path space is the the spectrum of a certain Abelian subalgebra of the ultragraph C*-algebra. In a similar way, in the finite alphabet case, a Markov shift is the spectrum of an abelian subalgebra of the associated Cuntz-Krieger algebra, see \cite{CK}. Although the theory of shift spaces defined in \cite{GRISU} is still in its infancy, there has been already applications to KMS states associated to ultragraph C*-algebras, see \cite{CG}, and to the diagonal-preserving isomorphism problem of ultragraph C*-algebras, see \cite{CRST, GRISU}.

Continuous shift commuting maps form the main class of maps studied in symbolic dynamics. The importance of this class of maps arises from the fact that given two shift spaces, viewed as topological dynamical systems with the correspondent shift maps, a shift commuting homeomorphism from one shift space to the other is by definition a topological conjugacy between the dynamical systems. For example, in \cite{GRISU} shift morphisms (continuous, shift commuting maps) between shift spaces were studied, in connection with isomorphism of the associated ultragraph C*-algebras.

For shift spaces over finite alphabets, the Curtis-Hedlund-Lyndon Theorem gives a complete characterization of the class of continuous shift commuting maps: Such class of maps corresponds to the class of sliding block codes, that is, corresponds to the class of maps which have bounded local rules\footnote{When sliding block codes are defined from a shift space onto itself they are named cellular automata and, as proposed by von Neumann (see \cite{vonNeumann}), are topological dynamical systems that serve as models for self-reproducing and self-organizing systems.} (see \cite[Chap. 6]{LindMarcus}).

For infinite-alphabet shift spaces (with the product topology) it was proved that continuous shift commuting maps correspond to generalized sliding block codes, that are maps which have local rules, but their local rules are not necessarily bounded (see \cite{GS}). In particular, uniformly continuous shift commuting maps correspond to sliding block codes in the classical sense of maps with bounded local rules (see \cite{Ceccherini-Silberstein--Coornaert}). In the Ott-Tomforde-Willis context, it was showed in \cite{GSS0} that there exist continuous shift commuting maps that are not generalized sliding block codes, and there exist generalized sliding block codes that are not continuous shift commuting maps. Furthermore, in \cite{GSS0} a complete characterization of the intersection of the class of continuous shift commuting maps with the class of generalized sliding block codes was given.

In this paper we provide a characterization of continuous shift commuting maps between the shift spaces defined in \cite{GRISU} (see Theorem~\ref{CSC}). In particular, we describe the connection between continuous shift commuting maps and generalized sliding block code (see Theorem~\ref{general-CHL-T}). As a result we completely characterize continuous, shift commuting, length preserving maps in terms of generalized sliding block codes (see Corollary~\ref{length-preserving_SBC}). Before we proceed to the main section (Section~3), we present a review of the ultragraph shift spaces given in \cite{GRISU} in Section~2 below.

\section{Background}\label{Background}

In this section we recall some background on ultragraphs and the shift spaces associated to them. We also set notation. Throughout this paper $\N$ denotes the set of positive integers.

\subsection{Ultragraphs}

Ultragraphs were introduced by Tomforde in \cite{Tom3} as the correct object to unify the study of graph and Cuntz-Krieger algebras (via ultragraph C*-algebras). Since their introduction ultragraphs have been used in connection with both dynamical systems and C*-algebra theory (see \cite{GLR, KMST, TomSimple} for example). Recently ultragraphs have become a key object in the study of infinite alphabet shift spaces, see \cite{GRultra, GRISU}. In this section we recall the main definitions and set up notation, following closely the notions introduced in \cite{MarreroMuhly, Tom3}.

\begin{defn}\label{def of ultragraph}
An \emph{ultragraph} is a quadruple $\mathcal{G}=(G^0, \mathcal{G}^1, r,s)$ consisting of a set of vertexes $G^0$, a set of edges $\mathcal{G}^1$, a map $s:\mathcal{G}^1 \to G^0$, and a map $r:\mathcal{G}^1 \to P(G^0)\setminus \{\emptyset\}$, where $P(G^0)$ stands for the power set of $G^0$.
\end{defn}

\begin{defn}\label{def of mathcal{G}^0}
Let $\mathcal{G}$ be an ultragraph. Define $\mathcal{G}^0$ to be the smallest subset of $P(G^0)$ that contains $\{v\}$ for all $v\in G^0$, contains $r(e)$ for all $e\in \mathcal{G}^1$, and is closed under finite unions and non-empty finite intersections (a characterization of  $\mathcal{G}^0$ in terms of intersections and unions of ranges of edges can be found in \cite[Lemma~2.12]{Tom3}).
\end{defn}

Let $\mathcal{G}$ be an ultragraph. A \textit{finite path} in $\mathcal{G}$ is either an element of $\mathcal{G}^{0}$ or a sequence of edges $\alpha=(\alpha_i)_{i=1}^k$ in $\mathcal{G}^{1}$, where
$s\left(  \alpha_{i+1}\right)  \in r\left(  \alpha_{i}\right)  $ for $1\leq i\leq k$. The set of finite paths in $\mathcal{G}$ is denoted by $\mathcal{G}^{\ast}$.

If we write $\alpha=(\alpha_i)_{i=1}^k$, then the length $\left|  \alpha\right|  $ of
$\alpha$ is just $k$. The length $|A|$ of a path $A\in\mathcal{G}^{0}$ is
zero. We define $r\left(  \alpha\right)  =r\left(  \alpha_{k}\right)  $ and
$s\left(  \alpha\right)  =s\left(  \alpha_{1}\right)  $. For $A\in\mathcal{G}^{0}$,
we set $r\left(  A\right)  =A=s\left(  A\right)  $.

An \textit{infinite path} in $\mathcal{G}$ is an infinite sequence of edges $\gamma=(\gamma_i)_{i\geq 1}$ in $\prod \mathcal{G}^{1}$, such that
$s\left(  \gamma_{i+1}\right)  \in r\left(  \gamma_{i}\right)  $ for all $i$. The set of
infinite paths  in $\mathcal{G}$ is denoted by $\mathfrak{p}^{\infty}_\G$. The length $\left|  \gamma\right|  $ of $\gamma\in\mathfrak{p}^{\infty}_\G$ is defined to be $\infty$, and we define $s(\gamma)=s(\gamma_1)$. A vertex $v$ in $\mathcal{G}$ is
called a \emph{sink} if $\left|  s^{-1}\left(  v\right)  \right|  =0$ and is
called an \emph{infinite emitter} if $\left|  s^{-1}\left(  v\right)  \right|
=\infty$. %We say that a vertex $v$ is a \emph{singular vertex} if it is either
%a sink or an infinite emitter.

We set $\mathfrak{p}^0_\G:=\mathcal{G}^{0}$ and, for $n\geq1$, we define
$\mathfrak{p}^{n}_\G:=\{\left(  \alpha,A\right)  :\alpha\in\mathcal{G}^{\ast
},\left\vert \alpha\right\vert =n,$ $A\in\mathcal{G}^{0},A\subseteq r\left(
\alpha\right)  \}$, and $$\mathfrak{p}_\G:=\bigcup\limits_{n\geq 0}\mathfrak{p}^{n}_\G.$$ We specify that $\left(  \alpha,A\right)  =(\beta,B)$ if,
and only if, $\alpha=\beta$ and $A=B$. We
define the length of $\left(  \alpha,A\right)\in\mathfrak{p}_\G$ as $|\left(\alpha,A\right)|:=|\alpha|$.
 We call $\mathfrak{p}_\G$ the \emph{ultrapath space}
associated with $\mathcal{G}$ and the elements of $\mathfrak{p}_\G$ are called
\emph{ultrapaths}. Each $A\in\mathcal{G}^{0}$ is regarded as an ultrapath of length zero and can be identified with the pair $(A,A)$. We embed the set of finite paths $\GG^*$ in $\mathfrak{p}$ by sending $\alpha$ to $(\alpha, r(\alpha))$.  We extend the range map $r$ and the source map $s$ to
$\mathfrak{p}_\G$ by the formulas, $r\left(  \left(  \alpha,A\right)  \right)
=A$, $s\left(  \left(  \alpha,A\right)  \right)  =s\left(  \alpha\right)
$ and $r\left(  A\right)  =s\left(  A\right)  =A$.\\

Given $\alpha=(\alpha_i)_{i=1}^k$ and $\beta=(\beta_i)_{i=1}^\ell$ in $\GG^*$ with $s(\beta)\in r(\alpha)$ we define the concatenation of $\alpha$ with $\beta$ as $\alpha\beta:=(\alpha_{1}\ldots \alpha_{k}\beta_{1}\ldots \beta_{\ell})\in \GG^*$. Given $\alpha\in \GG^*$ we say that $\alpha'\in \GG^*$ is a prefix, or initial segment, of $\alpha$ if either $\alpha'=\alpha$ or $\alpha=\alpha'\beta$ for some $\beta\in \GG^*$.

 Given $x\in\mathfrak{p}_\G$ and $y\in \mathfrak{p}_\G\cup\mathfrak{p}^{\infty}_\G$ such that $s(y)\subseteq r(x)$ (if $|y|=0)$ or $s(y)\in r(x)$ (if $|y|\geq 1$), we define the concatenation of $x$ and $y$ (and denote it as $xy$) as follows:

 \begin{equation}
 \begin{array}
 [c]{lcl}%
 x=A & \Rightarrow & xy := y;\\
 x=(\alpha,A) \text{ and } y=B& \Rightarrow & xy := (\alpha,B);\\
 x=(\alpha,A) \text{ and } y=(\beta,B)& \Rightarrow & xy := (\alpha\beta,B);\\
 x=(\alpha,A) \text{ and } y=(y_i)_{i\geq 1}\ldots& \Rightarrow & xy := (\alpha_1\ldots\alpha_{|\alpha|} y_1y_2y_3\ldots)
\end{array}
 \label{specify}%
 \end{equation}
 Given $x\in\mathfrak{p}_\G\cup\mathfrak{p}^{\infty}_\G$, we say that $x$ has $x'\in \mathfrak{p}_\G$ as a prefix, or initial segment, if
 $x=x'y$, for some $y\in\mathfrak{p}_\G\cup\mathfrak{p}^{\infty}_\G$.

\begin{defn}
\label{infinte emitter} For each subset $A$ of $G^{0}$, let
$\varepsilon\left(  A\right)  $ be the set $\{ e\in\mathcal{G}^{1}:s\left(
e\right)  \in A\}$. We shall say that a set $A$ in $\mathcal{G}^{0}$ is an
\emph{infinite emitter} whenever $\varepsilon\left(  A\right)  $ is infinite.
\end{defn}

\subsection{Ultragraph shift spaces}

In this section we recall the definition of a shift space associated to an ultragraph, as introduced in \cite{GRISU}. Since \cite{GRISU} only deals with ultragraphs without sinks we make the same assumption here.

{\bf Throughout assumption:} From now on all ultragraphs in this paper are assumed to have no sinks.

Before we define the topological space associated to an ultragraph we need the following definition.

\begin{defn}\label{minimal} Let $\GG$ be an ultragraph and $A\in \GG^0$. We say that $A$ is a minimal infinite emitter if it is an infinite emitter that contains no proper subsets (in $\GG^0$) that are infinite emitters. For a finite path $\alpha$ in $\GG$, we say that $A$ is a minimal infinite emitter in $r(\alpha)$ if $A$ is a minimal infinite emitter and $A\subseteq r(\alpha)$. We denote the set of all minimal infinite emitters in $r(\alpha)$ by $M_\alpha$, and define $$\mathfrak{p}_{\G min}:=\{(\alpha,A)\in\mathfrak{p}_\G: A\in M_\alpha\},$$ and $$\mathfrak{p}^0_{\G min}:= \mathfrak{p}_{\G min} \cap \mathfrak{p}^0_\G.$$

\end{defn}

To standardize the notation with previous work on sliding block codes (see \cite{GSS, GSS0} for example) we let
 $$X_\G^{fin}:=\{(x_i)_{i\geq 1}: (x_i)_{i\geq 1}=(\alpha_1\ldots\alpha_k AA\ldots)\text{ with } (\alpha_1\ldots\alpha_k,A)\in \mathfrak{p}_{\G min}\};$$
and let $X_\G^{inf}:=\mathfrak{p}^{\infty}_\G.$
%$$X_\G^{0}:=\{(x_i)_{i\geq 1}: \exists A \in \mathfrak{p}^0_{\G min} \text{ such that } x_i=A \ \forall i\geq 1 \}.$$

\begin{rmk} Notice that $X_\G^{fin}$ can be embedded in $\mathfrak{p}_\G$, via the map $\iota$ that takes $(\alpha_1\ldots\alpha_k AA\ldots)$ to $(\alpha_1\ldots\alpha_k,A)$. So we can translate concepts defined in $\mathfrak{p}_\G$ to $X_\G^{fin}$. For example, $y\in \mathfrak{p}_\G$ is a prefix of $x$ in $X_\G^{fin}$ iff it is a prefix of $\iota(x)$ in $\mathfrak{p}_\G$.
\end{rmk}

\begin{defn} Let $\G$ be an ultragraph.
We denote the set of sequences of the form $(AAA\ldots)$ in $X_\G^{fin}$ by $X_\G^{0}$. Elements of $X_\G^{0}$ are called 0-sequences and we set their length as zero. A sequence of the form $x=(\alpha_1\ldots\alpha_n AA\ldots)\in X_\G^{fin}$ is called a finite-sequence, or an $n$-sequence, (and we set its length as $|x|:=n$). Finally, a sequence of the form $x=(x_i)_{i\geq 1}\in X_\G^{inf}$ is called an infinite-sequence and we set its length as $|x|:=\infty$.
\end{defn}

As a topological space, the (shift) space associated to an ultragraph $\G$ is the set $$X_\G:= X_\G^{fin}\ \cup\ X_\G^{inf},$$
endowed with the topology generated by {\em generalized cylinders}, which are sets of the form:
%Abaixo j\'{a} foi definido
%We say that $A\in\mathfrak{p}^0_\G$ is a prefix, or initial segment, of a sequence $(x_i)_{i\geq 1}\in X_\G$ if and only if $s(x_1)\in A$ or $s(x_1)%%%%%\subset A$. For $(\alpha_1\ldots\alpha_n,A)\in \mathfrak{p}_\G$, we say that it is a prefix of a sequence $(x_i)_{i\geq 1}\in X_\G$ if $x_i=\alpha_i$ %for all $1\leq i\leq n$ and $s(x_{n+1})\in A$ or $s(x_{n+1})\subset A$.\\

\begin{equation}\label{generalizedcylinders}D_{\textbf{y},F}:=\{x\in X_\G: \textbf{y}\text{ is prefix of } x \text{ and } x_{|\textbf{y}|+1}\notin F\},\end{equation}
where $\textbf{y}\in \mathfrak{p}_\G$ and $F$ is a finite (possibly empty) subset of $\varepsilon(r(\textbf{y}))$. When $F=\emptyset$ we use the short notation $D_{\textbf{y}}:=D_{\textbf{y},F}$.

%%%%%% DESTRINCHANDO A EQUA\c{C}\~{A}O ACIMA

% \begin{itemize}
% \item If $\textbf{y}=B\in \mathfrak{p}^0_\G$, then
% $$D_{\textbf{y},F}=D_{B,F}= \{A\in\mathfrak{p}^{0,fin}: A\subset B\}\cup\{y\in\mathfrak{p}^{\infty}_\G\cup\mathfrak{p}^{fin}:y_1\in \varepsilon(B)\setminus F\}.$$
%
% \item If $\textbf{y}=(\beta,B)\in \mathfrak{p}^{n}_\G$, then
% $$D_{\textbf{y},F}=D_{(\beta,B),F}= \{(\beta,A)\in\mathfrak{p}^{fin}: A\subset % B\}\cup\{y\in\mathfrak{p}^{\infty}_\G\cup\mathfrak{p}^{fin}:y_i=\beta_i\ \forall i=1,\ldots,n \text{ and } y_{n+1}\in \varepsilon(B)\setminus F\}.$$\end{itemize}

%%%%%%%%%%%%%%%%%%%%%%%%%%%%%%%%%%%%

We remark that the generalized cylinders form a countable basis of clopen (but not necessarily compact) sets for a metrizable topology on $X_\G$ (see \cite{GRISU} for details, including conditions for local compactness of $X_\G$). Furthermore:

 \begin{itemize}
 \item If $x = (x_i)_{i\geq 1} \in X_\G^{inf}$ then a neighbourhood basis for $x$ is given by $$\{D_{(x_1 \ldots x_n, r(x_n))}: n\geq 1 \}; $$

 \item If $x=(\alpha AAA\ldots) \in X_\G^{fin}$ then a neighbourhood basis for $x$ is given by $$ \{D_{(\alpha, A),F}: F\subset \varepsilon\left(  A\right), |F|<\infty \};$$

%\item If $x=(AAA\ldots) \in X_\G^0$ then a neighbourhood basis for $x$ is given by $$ \{D_{A,F}: F\subset \varepsilon\left(  A\right), |F|<\infty \}.$$
\end{itemize}

For our work the description of convergence of sequences in $X_\G$ is important. We recall it below:

\begin{prop}\label{convseq} Let $\{x^n\}_{n=1}^{\infty}$ be a sequence of elements in $X_\G$, where $x^n = (\gamma^n_1\ldots \gamma^n_{k_n}, A_n)$ or $x^n = \gamma_1^n \gamma_2^n \ldots$, and let $x \in X_\G$.
\begin{enumerate}[(a)]
\item If $|x|= \infty$, say $x=\gamma_1 \gamma_2 \ldots$, then $\{x^n\}_{n=1}^{\infty}$ converges to $x$ if, and only if, for every $M\in \N$ there exists $N\in \N$ such that $n>N$ implies that $|x^n|\geq M$ and $\gamma^n_i= \gamma_i$ for all $1\leq i \leq M$.

\item If $|x|< \infty$, say $x=(\gamma_1 \ldots \gamma_k, A)$, then $\{x^n\}_{n=1}^{\infty}$ converges to $x$ if, and only if, for every finite subset $F\subseteq \varepsilon\left(  A\right)$ there exists $N\in \N$ such that $n > N$ implies that $x^n = x$ or $|x^n|> |x|$, $\gamma^n_{|x|+1} \in \ \varepsilon\left(  A\right)\setminus F$, and $\gamma^n_i = \gamma_i$ for all $1 \leq i \leq |x|$.
\end{enumerate}
\end{prop}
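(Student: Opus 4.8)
The plan is to reduce the proposition to the standard topological fact that a sequence $\{x^n\}$ converges to $x$ if, and only if, it is eventually contained in every member of a fixed neighbourhood basis at $x$. Since the generalized cylinders form a basis for the topology, and the excerpt already records an explicit neighbourhood basis at each type of point, the substantive step is to translate, for each relevant basis cylinder $D$, the membership relation $z\in D$ into the concrete conditions on lengths and coordinates stated in (a) and (b). Once that translation is in hand, both implications of each equivalence are immediate: the phrase ``eventually inside every basis neighbourhood'' becomes verbatim the ``for every $M$ (resp.\ for every finite $F$) there is $N$ such that $n>N$ implies \dots'' phrasing of the statement.

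For part (a), with $x=\gamma_1\gamma_2\ldots$ infinite, the relevant neighbourhoods are $D_{(x_1\ldots x_M,r(x_M))}$ for $M\ge 1$ (here $F=\emptyset$). I would unwind the definition of prefix for $\mathbf{y}=(x_1\ldots x_M,r(x_M))$ using the concatenation formulas in \eqref{specify}: because $r(\mathbf{y})=r(x_M)$ is the \emph{full} range of the last edge, the compatibility condition on sources imposes nothing beyond the legality of $z$ itself. Consequently $\mathbf{y}$ is a prefix of $z$ exactly when $|z|\ge M$ and $z_i=\gamma_i$ for $1\le i\le M$, covering both the infinite and the finite-sequence cases for $z$. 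As $F=\emptyset$, this is precisely membership in the cylinder, which is the condition displayed in (a).

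For part (b), with $x=(\gamma_1\ldots\gamma_k,A)$ finite, the neighbourhoods are $D_{(\gamma_1\ldots\gamma_k,A),F}$ for finite $F\subseteq\varepsilon(A)$, and $\mathbf{y}=\iota(x)$. I would split the prefix relation into two cases. Either $|z|=k$, where the concatenation $\mathbf{y}w$ with $|w|=0$ forces $w=A$ and hence $z=x$ --- and here minimality of $A$ is needed to guarantee that no other length-$k$ sequence lies in the cylinder. Or $|z|>k$, where the source condition reads $s(z_{k+1})\in A$, i.e.\ $z_{k+1}\in\varepsilon(A)$, together with $z_i=\gamma_i$ for $i\le k$. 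The extra cylinder requirement $z_{k+1}\notin F$ then sharpens the second case to $z_{k+1}\in\varepsilon(A)\setminus F$ and is vacuous in the first case, since the $(k+1)$-st coordinate of $x$ is the vertex set $A$, not an edge, and so cannot belong to $F\subseteq\varepsilon(A)$. This reproduces exactly the disjunction in (b).

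The manipulations are routine, so the step I would treat most carefully --- and the main obstacle --- is the boundary length, namely finite $z$ with $|z|=M$ in (a) and $|z|=k$ in (b). There one must read the coordinates of $z$ off the identification $\iota\colon X_\G^{fin}\hookrightarrow\mathfrak{p}_\G$ and check them against the concatenation rules of \eqref{specify}; in particular, in (b) one invokes that $A$ is a minimal infinite emitter to conclude that $x$ is the only length-$k$ point of $D_{(\gamma_1\ldots\gamma_k,A),F}$, whereas in (a) several distinct length-$M$ sequences may share the prefix and all correctly satisfy the stated condition. Everything else is direct substitution into the neighbourhood-basis criterion.
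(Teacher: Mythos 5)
Your argument is correct, and it is the natural one: the paper itself states this proposition without proof, recalling it from the reference \cite{GRISU} immediately after listing the neighbourhood bases at infinite and finite points, which is exactly the reduction you perform. Your translation of cylinder membership into the stated coordinate/length conditions, including the use of minimality of $A$ to pin down the unique length-$k$ element of $D_{(\gamma_1\ldots\gamma_k,A),F}$, is accurate, so there is nothing to add.
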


We define the shift map $\s:X_\G\to X_\G$ in the usual way:
$$\s\big((x_i)_{i\geq 1}\big)=(x_{i+1})_{i\geq 1}.$$

%\begin{rmk} Notice that if $|x| = \infty$ then $|\sigma(x)| = \infty$, if $1\leq |x|<\infty$ then $|\sigma(x)| = |x|-1$, and if $|x| = 0$ then $|\sigma(x)| = 0$.
%\end{rmk}

The shift map is not continuous at points of length zero. This will play an important role in our results. We have the following result regarding continuity of $\s$.

\begin{prop}\label{Rmk:shift_invariance_cylinders}
The shift map $\sigma : X_\G \rightarrow X_\G$ is continuous at all points of $X_\G$ with length greater than zero. %In addition, if $|x|\geq 1$ then there exists an open set $U$ that contains no elements of length zero such that $x \in U$, $\sigma(U)$ is an open subset of $X$, and $\sigma|_U : U \to \sigma(U)$ is a homeomorphism.
Furthermore, if $y\in\mathfrak{p}_\G$ and $|y|>0$ then $\s(D_{y,F})=D_{\s(y),F}$ (but this is not necessarily true if $|y|=0$).
\end{prop}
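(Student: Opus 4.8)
The plan is to prove the two assertions separately, using the sequential description of the topology from Proposition~\ref{convseq} for continuity and a direct set-theoretic computation for the cylinder identity.

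For continuity at a point $x$ with $|x|>0$, since $X_\G$ is metrizable it suffices to check sequential continuity: given $x^n\to x$ I must show $\sigma(x^n)\to\sigma(x)$. I would split into two cases according to $|x|$. When $|x|=\infty$, write $x=\gamma_1\gamma_2\ldots$ and use Proposition~\ref{convseq}(a): for each $M$, the convergence $x^n\to x$ applied with $M+1$ forces $|x^n|\ge M+1$ and agreement of the first $M+1$ edges for large $n$; since $\sigma$ merely drops the first edge, $|\sigma(x^n)|\ge M$ and $\sigma(x^n)$ agrees with $\sigma(x)$ on the first $M$ edges, which is exactly Proposition~\ref{convseq}(a) for $\sigma(x)$. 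When $0<|x|<\infty$, write $x=(\gamma_1\ldots\gamma_k,A)$, so $\sigma(x)=(\gamma_2\ldots\gamma_k,A)$ has range $A$ and length $k-1$. Here I would apply Proposition~\ref{convseq}(b) to $\sigma(x)$: given a finite $F\subseteq\varepsilon(A)$, feed the \emph{same} $F$ into the convergence $x^n\to x$; the two alternatives ``$x^n=x$'' and ``$|x^n|>k$ with $\gamma^n_{k+1}\in\varepsilon(A)\setminus F$ and first $k$ edges matching'' translate, coordinate by coordinate via $(\sigma(x^n))_j=x^n_{j+1}$, precisely into the two alternatives required for $\sigma(x^n)\to\sigma(x)$. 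This single argument covers $k\ge 1$ uniformly, the case $k=1$ giving a length-zero limit $\sigma(x)=(A)$.

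For the identity $\sigma(D_{y,F})=D_{\sigma(y),F}$ with $|y|=m>0$, the main tool is the sub-lemma that shifting commutes with removing the prefix $y$: if $|y|\ge1$ and the concatenation $yz$ is defined, then $\sigma(yz)=\sigma(y)z$, which follows directly from the concatenation formulas~(\ref{specify}) since $\sigma$ only deletes the first edge $y_1$. Writing $y=(y_1\ldots y_m,B)$, so that $\sigma(y)=(y_2\ldots y_m,B)$ has range $B$ and $F\subseteq\varepsilon(B)=\varepsilon(r(\sigma(y)))$, I would prove the two inclusions. For $\subseteq$: if $x\in D_{y,F}$ then $x=yz$, so the sub-lemma gives $\sigma(x)=\sigma(y)z$, whence $\sigma(y)$ is a prefix of $\sigma(x)$; moreover $(\sigma(x))_m=x_{m+1}\notin F$, so $\sigma(x)\in D_{\sigma(y),F}$. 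For $\supseteq$: given $w\in D_{\sigma(y),F}$ write $w=\sigma(y)z$ and set $x:=yz$; admissibility of this concatenation is inherited from that of $w$ because $r(y)=r(\sigma(y))=B$, the sub-lemma yields $\sigma(x)=w$, and $x_{m+1}=w_m\notin F$, so $x\in D_{y,F}$.

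The step demanding the most care — and the reason the hypothesis $|y|>0$ cannot be dropped — is the bookkeeping for finite-length elements. When $x\in D_{y,F}$ has length exactly $m$, the coordinate $x_{m+1}$ is the vertex set $A\in\mathcal{G}^0$ rather than an edge, so $x_{m+1}\notin F$ holds automatically; I must then check that the shifted element $(y_2\ldots y_m,A)$ genuinely lies in $X_\G$, which reduces to the observation that $M_{y_1\ldots y_m}=M_{y_2\ldots y_m}$ since $r(y_1\ldots y_m)=r(y_m)=r(y_2\ldots y_m)$, so membership in a minimal infinite emitter is preserved under the shift. This same identity $r(y)=r(\sigma(y))$ is what makes the concatenations $yz$ and $\sigma(y)z$ simultaneously admissible in the $\supseteq$ direction. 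Finally, for $|y|=0$ the sub-lemma fails: a length-zero $y=A$ satisfies $\sigma(y)=y$, yet for $x\in D_{A,F}$ the source $s(\sigma(x))=s(x_2)\in r(x_1)$ is unrelated to $A$, so $\sigma(D_{A,F})$ is not contained in $D_{A,F}$ in general, mirroring the already-noted discontinuity of $\sigma$ at length-zero points.
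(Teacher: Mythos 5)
The paper states Proposition~\ref{Rmk:shift_invariance_cylinders} without proof (it is recalled as background from \cite{GRISU}), so there is no in-paper argument to compare against; judged on its own, your proof is correct. The sequential-continuity argument via Proposition~\ref{convseq} (feeding $M+1$ in case (a) and the same $F$ in case (b), which works because $r(\sigma(x))=r(x)=A$), the sub-lemma $\sigma(yz)=\sigma(y)z$ for $|y|\geq 1$, and the two set inclusions all check out, and you correctly flag the only delicate points: that $F\subseteq\varepsilon(r(y))=\varepsilon(r(\sigma(y)))$ so $D_{\sigma(y),F}$ is a legitimate cylinder, and that minimality of the infinite emitter is preserved since $r(y_1\ldots y_m)=r(y_2\ldots y_m)$.
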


Next we recall the definition of the shift space.

\begin{defn} Let $\GG$ be an ultragraph. The {\em one-sided shift space} associated to $\GG$ is the pair $(X_\G,\sigma)$, where $X_\G$ and $\s$ are as defined above (with $X_\G$ viewed as a topological space). We will often refer to the space $X_\G$ with the understanding that the map $\sigma$ is attached to it.
\end{defn}

%\begin{rmk} Notice that if $\GG$ is a finite graph then there are no infinite emitters and $X_\G$ is the usual infinite path space of the graph. So the definition coincides with the usual definition of an edge shift.
%\end{rmk}

For use in the next sections we introduce the following definition.

\begin{defn} Let $\G$ be an ultragraph. The {\em alphabet} of the shift $X_\G$ is defined as the set $\A_\G$ of all the symbols that can appear in some sequence of $X_\G$, that is, $$\A_\G:=\G^1\cup \mathfrak{p}^0_{\G min}.$$

\end{defn}

\section{Continuous shift invariant maps}\label{sec:shift_invariant_maps}

The characterization of continuous, shift commuting maps is the main goal of this section (and of the paper). Before we prove our main results (in Subsection~\ref{CHL-Theo}), we need to develop a few auxiliary results. As mentioned before, we are under the assumption that all ultragraphs have no sinks.

\subsection{Shift commuting maps}

In this subsection we study shift commuting maps between shift spaces. We give a characterization of such maps below.

\begin{prop}\label{prop:shift_invariant_maps}
Let $\G$ and $\h$ be ultragraphs, and let $X_\G$ and $X_\h$ be their respective associated ultragraph shifts. A map $\Phi:X_\G\to X_\h$ is shift commuting (i.e. $\Phi \circ \s = \s \circ \Phi$) if, and only if, there exists a family of sets $\big\{C_a\big\}_{a\in\A_{\h}}$, which is a partition of $X_\G$, such that for all $x\in X_\G$ and $n\geq 1$ we have
    \begin{equation}\label{eq:shift_invariant_maps}\bigl(\Phi(x)\bigr)_n=\sum_{a\in \A_{\h}}a\mathbf{1}_{C_a}\circ\sigma^{n-1}(x), \end{equation}
where $\mathbf{1}_{C_a}$ is the
characteristic function of the set $C_a$ and $\sum$ stands for the symbolic sum.

\end{prop}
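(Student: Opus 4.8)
The plan is to establish the equivalence by interpreting the proposed formula as a symbolic encoding of the statement ``the $n$-th coordinate of $\Phi(x)$ depends only on $\sigma^{n-1}(x)$,'' and then showing this dependence is exactly what shift commuting means. First I would analyze the right-hand side of \eqref{eq:shift_invariant_maps}. Since $\{C_a\}_{a\in\A_\h}$ is a partition of $X_\G$, for each point $z\in X_\G$ there is exactly one $a\in\A_\h$ with $z\in C_a$, so the symbolic sum $\sum_{a}a\,\mathbf{1}_{C_a}(z)$ collapses to the single symbol $a$ for which $z\in C_a$. Thus the formula is really a well-defined rule assigning to each $z\in X_\G$ a symbol in $\A_\h$, and \eqref{eq:shift_invariant_maps} says this rule, applied to $\sigma^{n-1}(x)$, yields the $n$-th coordinate of $\Phi(x)$. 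The key structural observation is that this forces $(\Phi(x))_n$ to be a function of $\sigma^{n-1}(x)$ alone.

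For the forward direction, assume $\Phi$ is shift commuting. I would define, for each symbol $a\in\A_\h$, the set
\[
C_a:=\{z\in X_\G:\ (\Phi(z))_1=a\},
\]
i.e. $C_a$ is the preimage under the ``read the first coordinate of $\Phi$'' map of the symbol $a$. Since every point of $X_\h$ has a well-defined first coordinate lying in $\A_\h$, these sets partition $X_\G$. The work is then to verify \eqref{eq:shift_invariant_maps}: applying shift commutation $n-1$ times gives $\Phi(\sigma^{n-1}(x))=\sigma^{n-1}(\Phi(x))$, so $(\Phi(\sigma^{n-1}(x)))_1=(\sigma^{n-1}(\Phi(x)))_1=(\Phi(x))_n$. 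By the definition of $C_a$, the left-hand side equals the unique $a$ with $\sigma^{n-1}(x)\in C_a$, which is precisely the value of the symbolic sum in \eqref{eq:shift_invariant_maps}. This completes one direction. A point requiring care is that shift commutation, together with the convention that $\sigma$ shifts coordinates left, correctly aligns the first coordinate of $\sigma^{n-1}(\Phi(x))$ with the $n$-th coordinate of $\Phi(x)$; I would check this indexing explicitly against the definition $\sigma((x_i)_{i\ge1})=(x_{i+1})_{i\ge1}$.

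For the converse, assume such a partition $\{C_a\}$ exists and \eqref{eq:shift_invariant_maps} holds. I want $\Phi(\sigma(x))=\sigma(\Phi(x))$ for all $x$. I would compare the two sequences coordinate by coordinate: the $n$-th coordinate of $\sigma(\Phi(x))$ is $(\Phi(x))_{n+1}$, which by \eqref{eq:shift_invariant_maps} equals the unique symbol $a$ with $\sigma^n(x)\in C_a$; meanwhile the $n$-th coordinate of $\Phi(\sigma(x))$ equals, again by \eqref{eq:shift_invariant_maps} applied to the point $\sigma(x)$, the unique $a$ with $\sigma^{n-1}(\sigma(x))\in C_a$. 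Since $\sigma^{n-1}(\sigma(x))=\sigma^n(x)$, the two agree for every $n$, giving $\Phi\circ\sigma=\sigma\circ\Phi$.

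The main obstacle I anticipate is not the algebra but the bookkeeping around \emph{lengths and well-definedness in the ultragraph setting}. Unlike classical shift spaces, here sequences may be finite (the $n$-sequences and $0$-sequences) and the alphabet $\A_\h=\h^1\cup\mathfrak{p}^0_{\h min}$ mixes genuine edges with minimal-infinite-emitter symbols that only appear in the terminal ``tail'' position of a finite sequence. I would need to confirm that the symbolic sum notation and the coordinate extraction $(\Phi(x))_n$ are interpreted consistently for finite sequences — in particular that reading coordinates past the length of a finite sequence returns the constant tail symbol, so that the partition $\{C_a\}$ genuinely covers all of $X_\G$ and the formula \eqref{eq:shift_invariant_maps} makes sense for every $n\ge1$ and every $x$, regardless of $|x|$. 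Once the convention for coordinates of finite sequences is pinned down (so that $\sigma^{n-1}(x)$ always has a legitimate first coordinate in $\A_\h$), the equivalence follows from the elementary index-shifting argument above.
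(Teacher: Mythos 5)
Your proposal is correct and follows essentially the same route as the paper: your sets $C_a=\{z:(\Phi(z))_1=a\}$ coincide with the paper's $\Phi^{-1}(D_{(a,r(a))})$ for $a\in\h^1$ and $\Phi^{-1}(\{A\})$ for $a=A\in\mathfrak{p}^0_{\h min}$, and both directions reduce to the same index-shifting computation via $\Phi\circ\sigma^{n-1}=\sigma^{n-1}\circ\Phi$. The bookkeeping issue you flag about finite sequences is handled by the paper's convention that elements of $X_\G^{fin}$ are written as genuine sequences $(\alpha_1\ldots\alpha_k AA\ldots)$, so every coordinate is defined.
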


\begin{proof}

Suppose that $\Phi$ is shift commuting. Given $a\in\A_{\h}$,  let $C_a:=\Phi^{-1}\big(D_{(a,r(a))})$ if $a\in\h^1$, and let $C_a:=\Phi^{-1}\big(\{A\})$ if $a=A\in\mathfrak{p}^0_{\h min}$. It is straightforward that $\big\{C_a\big\}_{a\in\A_{\h}}$ is a partition of $X_\G$.

Now, for each $x\in X_\G$, to determine $(\Phi(x))_1$ it is only necessary to know what set $C_a$ contains $x$, that is, $\bigl(\Phi(x)\bigr)_1=\sum_{a\in\A_{\h}}a\mathbf{1}_{C_a}(x)$. Therefore, since for each $n\geq 1$ we have
$\Phi\circ\s^n=\s^n\circ\Phi$, it follows that
$$\bigl(\Phi(x)\bigr)_n=\bigl(\sigma^{n-1}(\Phi(x))\bigr)_1=\bigl(\Phi(\sigma^{n-1}(x))\bigr)_1=\sum_{a\in\h}a\mathbf{1}_{C_a}(\sigma^{n-1}(x)).$$

For the converse, suppose that $\Phi$ is given by \eqref{eq:shift_invariant_maps}. To check that $\Phi$ is shift commuting we just need to check that, for all $x\in X_\G$ and $n\geq 1$, we have $\Big(\Phi\big(\s(x)\big)\Big)_n=\Big(\s\big(\Phi(x)\big)\Big)_n$. This follows from the following computation.
    $$\Big(\Phi\big(\s(x)\big)\Big)_n
    =\sum_{a\in \A_\h}a\mathbf{1}_{C_a}\circ\sigma^{n-1}\big(\s(x)\big)
    =\sum_{a\in \A_\h}a\mathbf{1}_{C_a}\circ\sigma^{n}(x)
    =\big(\Phi(x)\big)_{n+1}
    =\Big(\s\big(\Phi(x)\big)\Big)_n.$$

\end{proof}

%\begin{rmk} Notice that if $\phi$ is a shift commuting map (described as in Proposition~\ref{prop:shift_invariant_maps}) then, for each $A\in  \mathfrak{p}^0_{\h min}$, the set $C_A$ is shift invariant (that is $\s(C_A)\subseteq C_A$). Indeed, note that if $\phi(x)=A$ then $\phi (\sigma (x))= \sigma (\phi(x)) = A$.
%\end{rmk}

The following results will be useful in the next section.

\begin{lem}\label{sliding block code->finite seq goes to finite seq}
Let $\Phi:X_\G\to X_\h$ be a shift commuting map and $(AA\ldots)\in X^0_\G$. If $|\Phi(AA\ldots)|=0$ then the image of every $x=(x_1x_2\ldots x_nA\ldots)\in X^{fin}_\G$ under $\Phi$ is a finite sequence in $X_\h$ with length no greater than $|x|$.
\end{lem}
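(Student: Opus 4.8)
The plan is to realize the $0$-sequence $(AA\ldots)$ as a shift-iterate of $x$ and then transport the length information through the shift-commuting relation. Since $x=(x_1x_2\ldots x_n A\ldots)\in X^{fin}_\G$ has length $n$, every entry of $x$ beyond position $n$ equals the minimal infinite emitter $A$; hence $\sigma^n(x)=(AA\ldots)$ is precisely the $0$-sequence in the hypothesis. Applying the relation $\Phi\circ\sigma=\sigma\circ\Phi$ a total of $n$ times then gives
$$\sigma^n\big(\Phi(x)\big)=\Phi\big(\sigma^n(x)\big)=\Phi(AA\ldots),$$
and by hypothesis the right-hand side has length $0$. Writing $y:=\Phi(x)\in X_\h$, the problem reduces to showing that $|\sigma^n(y)|=0$ forces $y$ to be a finite sequence of length at most $n$.

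To finish I would use the way $\sigma$ acts on lengths in $X_\h$. If $y$ is infinite then $\sigma^k(y)$ is infinite for every $k$, so $|\sigma^n(y)|=0$ already rules out $|y|=\infty$. If $y$ is finite of length $m$, then each application of $\sigma$ removes one leading edge while the length is positive and fixes the constant tail once the length reaches $0$; consequently $|\sigma^k(y)|=\max(m-k,0)$, so $|\sigma^n(y)|=0$ is equivalent to $m\leq n$. Applying this to $y=\Phi(x)$ yields that $\Phi(x)$ is finite with $|\Phi(x)|\leq n=|x|$, as claimed.

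The argument uses only the algebraic identity $\Phi\circ\sigma=\sigma\circ\Phi$ together with the dichotomy (finite versus infinite) built into $X_\h$; neither continuity of $\Phi$ nor continuity of $\sigma$ is needed. The sole step deserving care --- and the only candidate for an ``obstacle'', though a mild one --- is justifying the length-under-shift formula cleanly, in particular that shifting a finite sequence keeps it inside $X_\h$ as a genuinely shorter finite sequence, so that lengths drop by exactly one until they hit $0$. This is immediate, since $(y_2\ldots y_m,B)\in\mathfrak{p}_{\h min}$ whenever $(y_1\ldots y_m,B)\in\mathfrak{p}_{\h min}$.
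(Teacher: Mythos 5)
Your proof is correct and follows essentially the same route as the paper: both apply $\sigma^{|x|}\circ\Phi=\Phi\circ\sigma^{|x|}$ to reduce to the fact that $\sigma^{|x|}(\Phi(x))$ has length zero. You simply make explicit the final step (that $|\sigma^{n}(y)|=0$ forces $y$ finite with $|y|\le n$), which the paper leaves implicit.
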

\begin{proof}

Let $(AA\ldots)\in X^0_\G$ and suppose that $\Phi((AA\ldots)) = (BB\ldots)\in X^0_\h$. Let $x:=(x_1x_2\ldots x_nA\ldots)\in X^{fin}_\G$,  then
$$\sigma^{|x|} \circ \Phi(x) = \Phi \circ \sigma^{|x|} (x) = \Phi (AA\ldots) = (BB\ldots).$$

\end{proof}

\begin{lem}\label{auxiliar} Let $\Phi:X_{\GG}\to X_{\h}$ be a shift commuting map. Then $\phi(e_1e_2e_3\ldots)$ = $a \phi (e_2 e_3 \ldots)$, where $a\in A_{X_{\h}}$. The same result holds for finite sequences.
\end{lem}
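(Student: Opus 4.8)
The plan is to reduce the lemma to the purely combinatorial fact that every element of $X_\h$ is the concatenation of its first symbol with its shift, and then to move the shift across $\Phi$ using the commuting hypothesis. Set $y:=\Phi(e_1e_2e_3\ldots)$ and let $a:=y_1$ be the first symbol of $y$; by definition of the alphabet, $a\in\A_\h=\h^1\cup\mathfrak{p}^0_{\h min}$. All the content lies in the identity $y=a\,\sigma(y)$, the product being the concatenation of \eqref{specify}, together with the relation $\sigma(y)=\Phi(\sigma(e_1e_2e_3\ldots))=\Phi(e_2e_3\ldots)$ supplied by $\Phi\circ\sigma=\sigma\circ\Phi$.

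First I would verify the decomposition $y=a\,\sigma(y)$ by a case analysis on $|y|$, matching each case to a line of \eqref{specify}. If $|y|=\infty$, then $a=y_1\in\h^1$, $\sigma(y)=(y_{i+1})_{i\geq1}$, and the fourth line of \eqref{specify} gives $a\,\sigma(y)=y$. If $1\leq|y|<\infty$, say $y=(y_1\ldots y_m BB\ldots)$, then $a=y_1\in\h^1$ and, identifying finite sequences with their images in $\mathfrak{p}_\h$ under $\iota$, the third line of \eqref{specify} (or the second, when $m=1$ and $\sigma(y)$ has length zero) yields $a\,\sigma(y)=y$. Finally, if $|y|=0$, say $y=(BB\ldots)$, then $a=B\in\mathfrak{p}^0_{\h min}$ and $\sigma(y)=y$; the first line of \eqref{specify} makes the length-zero prefix $B$ act as the identity, so again $a\,\sigma(y)=y$. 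In every case $y=a\,\sigma(y)$ with $a\in\A_\h$.

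Substituting $\sigma(y)=\Phi(e_2e_3\ldots)$ into this identity gives $\Phi(e_1e_2e_3\ldots)=a\,\Phi(e_2e_3\ldots)$, as required. For a finite sequence $x=(e_1\ldots e_nA\ldots)$ with $n\geq1$ the same three steps apply verbatim, now with $\sigma(x)=(e_2\ldots e_nA\ldots)$ (and $\sigma(x)=(A\ldots)$ when $n=1$); neither the decomposition identity nor the commuting relation is sensitive to whether $x$ is finite or infinite.

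I do not anticipate a genuine obstacle: once the decomposition is in place the conclusion is a one-line consequence of shift commutativity. The only point demanding attention is the bookkeeping of the concatenation conventions in \eqref{specify}, and in particular the degenerate case $|y|=0$, where the first symbol is a minimal infinite emitter rather than an edge and concatenation by it collapses to the identity.
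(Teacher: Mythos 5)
Your proof is correct: the identity $y=y_1\,\sigma(y)$ (checked against each line of the concatenation rules, including the degenerate length-zero case where the prefix acts as the identity) combined with $\sigma(\Phi(x))=\Phi(\sigma(x))$ is exactly the intended argument. The paper states this lemma without proof, treating it as immediate from shift commutativity, and your write-up simply supplies the routine case-checking that the authors omit.
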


The next two results follow as in Section~ 3.1 of \cite{GSS}.

\begin{prop}\label{sliding block code->preserves_period}
If $\Phi:X_\G\to X_\h$ is a shift commuting map and $x\in X_\G$ is a sequence with period $p\geq 1$ (that is, such that $\s^p(x)=x$) then $\Phi(x)$ also has period $p$.
\end{prop}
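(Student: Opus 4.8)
The plan is to prove Proposition~\ref{sliding block code->preserves_period}: if $\Phi:X_\G\to X_\h$ is shift commuting and $x\in X_\G$ satisfies $\s^p(x)=x$ for some $p\geq 1$, then $\s^p(\Phi(x))=\Phi(x)$. The statement is short and the hypothesis is purely algebraic (no continuity is assumed), so I expect the proof to be a direct computation exploiting $\Phi\circ\s=\s\circ\Phi$ together with the coordinate description furnished by Proposition~\ref{prop:shift_invariant_maps}.

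First I would simply iterate the commutation relation. Since $\Phi\circ\s=\s\circ\Phi$, an easy induction on $k$ gives $\Phi\circ\s^k=\s^k\circ\Phi$ for every $k\geq 1$; I would state this first. Applying it with $k=p$ and using the hypothesis $\s^p(x)=x$ yields
$$\s^p\big(\Phi(x)\big)=\Phi\big(\s^p(x)\big)=\Phi(x),$$
which is exactly the assertion that $\Phi(x)$ has period $p$. At the formal level this is the whole argument.

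The one point that deserves care, and which I expect to be the main obstacle, is that $\s$ on $X_\G$ is \emph{not} injective at length-zero points and more generally behaves differently on finite versus infinite sequences, so the conclusion ``$\Phi(x)$ has period $p$'' must be interpreted consistently with the meaning of period used in the statement (namely $\s^p(y)=y$). The displayed computation delivers precisely $\s^p(\Phi(x))=\Phi(x)$, so there is no genuine gap; however, to be safe I would also observe, via the coordinate formula \eqref{eq:shift_invariant_maps} of Proposition~\ref{prop:shift_invariant_maps}, that $\big(\Phi(x)\big)_{n+p}=\sum_{a\in\A_\h}a\,\mathbf{1}_{C_a}\circ\s^{n+p-1}(x)=\sum_{a\in\A_\h}a\,\mathbf{1}_{C_a}\circ\s^{n-1}(x)=\big(\Phi(x)\big)_n$ for all $n\geq 1$, where the middle equality uses $\s^{n+p-1}(x)=\s^{n-1}(\s^p(x))=\s^{n-1}(x)$. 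This coordinate-by-coordinate verification confirms that the image is genuinely $p$-periodic in every entry and handles uniformly both the case $|\Phi(x)|=\infty$ and the case where $\Phi(x)$ is a finite or $0$-sequence.

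In short, the proof reduces to iterating $\Phi\circ\s=\s\circ\Phi$ and substituting the periodicity hypothesis; the role of Proposition~\ref{prop:shift_invariant_maps} is only to reassure us that the equality $\s^p(\Phi(x))=\Phi(x)$ is equivalent to genuine entrywise $p$-periodicity of the image, so that no subtlety arising from the non-injectivity of $\s$ at short sequences causes trouble.
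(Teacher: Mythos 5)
Your proof is correct and is essentially the standard argument the paper has in mind (the paper itself omits the proof, deferring to Section~3.1 of the cited reference \cite{GSS}, where the same one-line computation $\s^p(\Phi(x))=\Phi(\s^p(x))=\Phi(x)$ is used). The additional coordinate-wise check via Proposition~\ref{prop:shift_invariant_maps} is harmless but not needed, since the statement defines period $p$ exactly as $\s^p(y)=y$.
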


\begin{cor}\label{sliding block code->emptysequence goes to constant}
If $\Phi:X_\G\to X_\h$ is a shift commuting map  then, for all $(AA\ldots)\in X^0_\G$, we have that $\Phi(AA\ldots)$ is a constant sequence (that is, $\Phi(AA\ldots)=(ddd\ldots)$ for some $d\in \A_\h$).
\end{cor}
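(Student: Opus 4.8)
The plan is to recognize that a $0$-sequence is precisely a fixed point of the shift, hence a periodic point of period $1$, to invoke Proposition~\ref{sliding block code->preserves_period} to transport this periodicity through $\Phi$, and finally to observe that a sequence in any ultragraph shift which is fixed by $\s$ must be constant.

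First I would note that for $(AA\ldots)\in X^0_\G$ the definition $\s\big((x_i)_{i\geq 1}\big)=(x_{i+1})_{i\geq 1}$ gives $\s(AA\ldots)=(AA\ldots)$, so $(AA\ldots)$ satisfies $\s^p(x)=x$ with $p=1$. Applying Proposition~\ref{sliding block code->preserves_period} with this $p$, the image $y:=\Phi(AA\ldots)$ also has period $1$, that is $\s(y)=y$. Since this proposition requires only that $\Phi$ be shift commuting (and not continuous), the hypothesis of the corollary suffices.

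Next I would classify the sequences $y\in X_\h$ fixed by the shift. If $y$ is an infinite sequence, say $y=(y_1y_2y_3\ldots)$, then $\s(y)=(y_2y_3\ldots)=y$ forces $y_i=y_{i+1}$ for all $i$, so $y=(ddd\ldots)$ with $d:=y_1\in\h^1\subseteq\A_\h$. If instead $y$ had finite length $k\geq 1$, writing $y=(\beta_1\ldots\beta_k BB\ldots)$ one sees that $\s(y)=(\beta_2\ldots\beta_k BB\ldots)$ has length $k-1<k$, contradicting $\s(y)=y$; hence the only finite fixed points are the $0$-sequences $(BBB\ldots)$ with $B\in\mathfrak{p}^0_{\h min}\subseteq\A_\h$, which are again constant with constant symbol $d:=B$. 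In either case $\Phi(AA\ldots)=(ddd\ldots)$ for some $d\in\A_\h$, as required.

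The only point demanding any care, the ``main obstacle'' though it is a mild one, is this classification of the fixed points of $\s$, and in particular ruling out finite sequences of positive length: this rests on the observation that $\s$ strictly decreases the length of a finite sequence of positive length, so no such sequence can equal its own shift. Once this is in hand the corollary follows at once from Proposition~\ref{sliding block code->preserves_period}.
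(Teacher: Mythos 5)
Your proof is correct and follows essentially the route the paper intends: the paper gives no explicit argument (deferring to Section~3.1 of \cite{GSS}) but presents the statement precisely as a corollary of Proposition~\ref{sliding block code->preserves_period}, applied with $p=1$ exactly as you do. Your classification of the fixed points of $\s$ (constant infinite sequences, the $0$-sequences, and no finite sequences of positive length since $\s$ strictly decreases their length) is the right way to finish.
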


We end this section by proving that for a shift commuting map $\Phi:X_\G\to X_\h$, described in terms of characteristic functions of a partition $\big\{C_a\big\}_{a\in\A_{\h}}$ of $X_\h$ as in Proposition~\ref{prop:shift_invariant_maps}, the sets associated to the elements of length zero are shift invariant.

\begin{cor}\label{cor:maps_between_ultragraph_shifts->C_A_are_shift_invariant}
Let $\G$ and $\h$ be two ultragraphs, and $X_\G$ and $X_\h$ be the associated ultragraph shifts, respectively. Let $\Phi:X_\G\to X_\h$ be a shift commuting map and $\big\{C_a\big\}_{a\in\A_{\h}}$ be the partition of  $X_\h$ given in Proposition~\ref{prop:shift_invariant_maps}. Then, for all $A\in\mathfrak{p}^0_{\h min}$, we have that $\s(C_A)\subset C_A$, that is, $C_A$ is shift invariant.
\end{cor}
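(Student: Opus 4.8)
The plan is to derive the claim directly from the shift-commuting identity $\Phi\circ\s=\s\circ\Phi$, using only the explicit description of $C_A$ recorded in the proof of Proposition~\ref{prop:shift_invariant_maps}. First I would recall that, for a length-zero symbol $A\in\mathfrak{p}^0_{\h min}$, the set $C_A$ is defined as $\Phi^{-1}(\{A\})$, where $A$ is identified with the $0$-sequence $(AAA\ldots)\in X_\h^0$. Hence membership $x\in C_A$ is equivalent to the statement $\Phi(x)=(AAA\ldots)$.

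Next I would fix an arbitrary $x\in C_A$ and compute the image of its shift. By the shift-commuting hypothesis, $\Phi(\s(x))=\s(\Phi(x))=\s(AAA\ldots)$. Since the shift map sends every constant sequence to itself, $\s(AAA\ldots)=(AAA\ldots)$, and therefore $\Phi(\s(x))=(AAA\ldots)$, i.e. $\s(x)\in C_A$. As $x\in C_A$ was arbitrary, this gives $\s(C_A)\subset C_A$, which is precisely the asserted shift invariance.

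I do not anticipate a genuine obstacle here: the result is essentially a one-line consequence of the fact that $\s$ fixes every $0$-sequence combined with the shift-commuting relation. The only point worth spelling out is the interpretation of $C_A$ for a length-zero symbol, which rests on the structure of $X_\h^{fin}$: a length-zero symbol can occur only in the constant tail, so $(\Phi(x))_1=A$ forces $\Phi(x)$ to be the $0$-sequence $(AAA\ldots)$, justifying the identification $C_A=\Phi^{-1}(\{(AAA\ldots)\})$. Alternatively, one could argue through the characteristic-function formula~\eqref{eq:shift_invariant_maps}: from $x\in C_A$ one obtains $(\Phi(x))_2=A$, and since $\big\{C_a\big\}_{a\in\A_{\h}}$ is a partition, the symbolic sum $\sum_{a\in\A_\h}a\mathbf{1}_{C_a}(\s(x))$ can equal $A$ only when $\s(x)\in C_A$, recovering the same conclusion.
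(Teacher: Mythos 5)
Your proof is correct and follows essentially the same route as the paper: both rest on the observation that $C_A=\Phi^{-1}(\{(AAA\ldots)\})$ because a length-zero symbol can only occur in a constant tail, and then apply the shift-commuting identity together with the fact that $\s$ fixes the constant sequence. The paper phrases this coordinate-wise via $(\Phi(\s^{i-1}(x)))_1=(\s^{i-1}(\Phi(x)))_1=A$, while you argue at the level of whole sequences, but the content is identical.
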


\begin{proof}

Given $A\in\mathfrak{p}^0_{\h min}$, let $x\in C_A$ and $y:=\Phi(x)\in X_\h$. Since $y_1=\big(\Phi(x)\big)_1=A \in X_\h$ it follows that $y_i=A$ for all $i\geq 1$. Thus, for all $i\geq 1$, it follows that $A=y_i=\Big(\s^{i-1}\big(\Phi(x)\big)\Big)_1=\Big(\Phi\big(\s^{i-1}(x)\big)\Big)_1$, which means that $\s^{i-1}(x)\in C_A$.

\end{proof}

\subsection{Generalized sliding block codes}\label{sec:Sliding block codes}

In this subsection we recall the concept of generalized sliding block codes, which rely on the notion of finitely defined sets. We also present examples in the ultragraph setting. We start with the definition of blocks.

Let $\G$ be an ultragraph. For each $n\geq 1$, let
$$B_n(X_\G):=\{(a_1\ldots a_n)\in (\A_\GG)^n:\ \text{there exists } x\in X_\G,\ i\geq 1, \text{ such that } x_{i+j-1}=a_j\text{ for all } j=1,\ldots,n\}$$
be the set of all {\em blocks of length $n$} in $X_\G$.\\

%We single out the blocks of length one and use the notation $\LA:= B_1(X_\G)\setminus\{A\in \GG^0: A \text{ is a minimal infinite emitter} \}$ -- this is the set of all symbols used by sequences of $X_\G$, or the {\em letters} of $X_\G$.

The {\em language} of $X_\G$ is \begin{equation}\label{language}B(X_\G):=\bigcup_{n\geq 1}B_n(X_\G).\end{equation}

\begin{rmk} Notice that while a finite sequence $(\alpha AAA\ldots) \in X_\G^{fin}$ has length $|\alpha|$ the block $(\alpha A\ldots A) $, where we repeat $n$ times the symbol $A$, has length $|\alpha| + n$.
\end{rmk}

Before we can introduce finitely defined sets we need the notion of pseudo cylinders.

\begin{defn} A pseudo cylinder in a shift space $X_\G$ is a set of the form
$$[b]_{k}^\ell:=\{(x_i)_{i\in\N}\in X_\G: (x_{k}\ldots x_\ell)=b\},$$
where $1\leq k\leq\ell$ and $b\in B(X_\G)$. We also assume that the empty set is a pseudo cylinder.
\end{defn}

%We say that the pseudo cylinder $[b]_{k}^\ell$ has anticipation $\ell$.
%We say that the pseudo cylinder $[b]_{k}^\ell$ has memory  $K:=-\min \{0,k\}$ and anticipation $L:=\max\{0,\ell\}$. Note that if $\Lambda$ is a one-sided shift space, then any pseudo cylinder of $\Lambda$ has memory equal to zero. We adopt the convention that the empty set is a pseudo cylinder of $\Lambda$ whose memory and anticipation are zero.

We remark that in the context of shift spaces with the product topology pseudo cylinders are equivalent to cylinders. On the other hand, for ultragraph shift spaces (and also in the context of the shift spaces studied in \cite{GSS0,GSS1} and \cite{Ott_et_Al2014}), a pseudo cylinder is not necessarily an open set. However, as we will see in Proposition~\ref{lem:generalized_cylinders_are_finitely_defined}, a generalized cylinder, and its complement, can always be written as union of pseudo cylinders, that is, a generalized cylinder is a finitely defined set, accordingly to the following:

\begin{defn} Given $C\subset X_\G$, we say that $C$ is a {\em finitely defined} in $X_\G$ if both $C$ and $C^c$ can be written as unions of pseudo cylinders. More precisely, $C$ is finitely defined if there exist two collections of pseudo cylinders in $X_\G$, namely $\{[b^i]_{k_i}^{\ell_i}\}_{i\in I}$ and $\{[d^j]_{m_j}^{n_j}\}_{j\in J}$,  such that
\begin{equation*}
C=\bigcup_{i\in I} [b^i]_{k_i}^{\ell_i}\qquad \text{and}\qquad
C^c= \bigcup_{j\in J}[d^j]_{m_j}^{n_j}.
\end{equation*}
\end{defn}

\begin{rmk} Intuitively, a finitely defined set $C$ in $X_\G$ is a set such that, given $x\in X_\G$, we can `decide' whether it belongs (or not) to $C$ by knowing a finite quantity of its coordinates.
\end{rmk}

%In this case we say that $C$ has memory  $K:=-\inf_{i\in I} \{0,\ k_i\}$ and anticipation $L:=\sup_{i\in I}\{0,\ \ell_i\}$.

The empty set and $X_\G$ itself are trivial examples of finitely defined sets in $X_\G$. Other examples are:

\begin{ex}\label{finitexample} Let $\GG$ be an ultragraph and let $Z$ be a subset of $X_\G^0$. Then $Z$ is a finitely defined set. On the other hand, suppose that there exist $\gamma = e_1 e_2 \ldots \in \mathfrak{p}^{\infty}_\G$ such that $|s(e_i)|\geq 2$ for each $i$. Then $\{\gamma \}^c$, which can be written as a countable union of generalized cylinder sets, is not finitely defined. % (notice that the existence of $\gamma$ with the mentioned property depends on the ulgragraph $\GG$).
\end{ex}
 \begin{proof}

 Notice that $Z= \displaystyle \bigcup_{AA\ldots \in Z} [AA]_1^2$ and $Z^c = \displaystyle \bigcup_{a\in \A_{\GG} \setminus Z} [a]^1_1$ (where on the second union we use the identification of $AA\ldots $ in $X_\G^0$ with $A\in \mathfrak{p}_{\G min}^0$).

For the second part, notice that $\{\gamma \}$ can not be written as an union of pseudo cylinders.

 \end{proof}

 As we already mentioned, generalized cylinder sets are finitely defined. We prove this below.

\begin{prop}\label{lem:generalized_cylinders_are_finitely_defined} Let $\G$ be an ultragraph and $X_\G$ be the associated ultragraph shift space. Then, for all $\textbf{y}\in \mathfrak{p}_\G$ and all finite set $F\subset \varepsilon(r(\textbf{y}))$, the generalized cylinder $D_{\textbf{y},F}$ is a finitely defined set.
\end{prop}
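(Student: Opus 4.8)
The plan is to exhibit both $D_{\textbf{y},F}$ and its complement as unions of pseudo cylinders, which is precisely what the definition of a finitely defined set demands. The guiding observation is that membership of a point $x$ in $D_{\textbf{y},F}$ is decided by its first $|\textbf{y}|+1$ coordinates: the first $|\textbf{y}|$ must spell out the path part of $\textbf{y}$, while coordinate $|\textbf{y}|+1$ must respect the range restriction encoded in $r(\textbf{y})$ and avoid $F$. I would write $\textbf{y}=(\alpha,A)$ with $\alpha=\alpha_1\cdots\alpha_n$ and $n=|\textbf{y}|\geq 1$ (the case $n=0$, where $\textbf{y}=A$, is treated identically with length-$1$ blocks). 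Throughout I keep in mind that, since finite sequences are constant from their length onward, every $x\in X_\G$ has all coordinates $x_i$ defined, each lying in $\A_\G=\G^1\cup\mathfrak{p}^0_{\G min}$.

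First I would describe the symbols admissible in coordinate $n+1$. Unwinding the concatenation rules in \eqref{specify}, $\textbf{y}$ is a prefix of $x$ precisely when $x_1\cdots x_n=\alpha$ and either $x$ has length $>n$, forcing $x_{n+1}$ to be an edge $e$ with $s(e)\in A$, or $x$ has length exactly $n$, forcing $x_{n+1}$ to be a minimal infinite emitter $B\subseteq A$ (while length $<n$ is impossible, as some $\alpha_i$ would have to equal a length-zero symbol). Combining this with $x_{n+1}\notin F$, which is automatic in the second case since $B$ is not an edge, the admissible followers form $S:=(\varepsilon(A)\setminus F)\cup\{B\in\mathfrak{p}^0_{\G min}:B\subseteq A\}$, and I would verify both inclusions in
$$D_{\textbf{y},F}=\bigcup_{c\in S}[\alpha_1\cdots\alpha_n c]_1^{n+1}.$$
Each block $\alpha_1\cdots\alpha_n c$ genuinely lies in $B(X_\G)$: for $c\in\varepsilon(A)\setminus F$ the finite path $\alpha_1\cdots\alpha_n c$ extends to a sequence by the no-sinks hypothesis, and for $c=B\subseteq A$ the finite sequence $(\alpha_1\cdots\alpha_n BB\ldots)\in X_\G^{fin}$ realizes the block.

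Next I would treat the complement by splitting according to whether the length-$n$ prefix of $x$ agrees with $\alpha$. The disagreement part is $\bigcup_{b\in B_n(X_\G),\,b\neq\alpha}[b]_1^n$, since every $x$ has a well-defined prefix block $(x_1,\ldots,x_n)\in B_n(X_\G)$. The agreement part consists of those $x$ with $x_1\cdots x_n=\alpha$ whose follower $x_{n+1}$ is \emph{not} admissible; these followers are exactly the edges $e$ with $s(e)\in r(\alpha_n)$ but $e\in F$ or $s(e)\notin A$, together with the minimal infinite emitters $B\in M_\alpha$ with $B\not\subseteq A$, giving $\bigcup_{c}[\alpha_1\cdots\alpha_n c]_1^{n+1}$ over these forbidden followers $c$ of $\alpha$. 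The union of the two parts expresses $(D_{\textbf{y},F})^c$ as a union of pseudo cylinders, completing the argument.

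The step I expect to be most delicate is the case analysis for coordinate $n+1$, because this coordinate can be either an edge or a length-zero symbol (a minimal infinite emitter), and the prefix relation of \eqref{specify} treats the two very differently. In particular one must not overlook that finite sequences of length exactly $n$ enter $D_{\textbf{y},F}$ through minimal infinite emitters $B\subseteq A$ (and then automatically satisfy $x_{n+1}\notin F$), and that an edge whose source lies in $r(\alpha_n)\setminus A$ falls into the complement even though its first $n$ coordinates match $\alpha$. Making this dichotomy exhaustive and correctly aligning it with the concatenation definition is the crux; once it is settled, assembling the pseudo cylinders is routine.
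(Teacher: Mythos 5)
Your proposal is correct and follows essentially the same route as the paper: both express $D_{\textbf{y},F}$ as the union of the pseudo cylinders $[\alpha_1\cdots\alpha_n c]_1^{n+1}$ over the admissible followers $c\in(\varepsilon(A)\setminus F)\cup\{B\in\mathfrak{p}^0_{\G min}:B\subseteq A\}$, and the complement as the pseudo cylinders over length-$n$ blocks disagreeing with $\alpha$ together with those over the forbidden followers (your careful treatment of which followers are forbidden, and your verification that the blocks lie in $B(X_\G)$, is in fact slightly more precise than the paper's phrasing). No substantive difference in approach.
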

\begin{proof}

Let $\textbf{y}\in \mathfrak{p}_\G$, and let $F\subset \varepsilon(r(\textbf{y}))$ be a finite set. If $\textbf{y}=A\in \mathfrak{p}_{\G}^0$, then it follows that
$$D_{A,F}=\bigcup_{e \in \varepsilon(A)\setminus F}[e]_1^1 \ \ \bigcup \bigcup_{B\subset A,\ B\in \mathfrak{p}^0_{\G min}}[B]_1^1$$
and
$$D_{A,F}^c=\bigcup_{e \in \varepsilon(A)^c\cup F}[e]_1^1 \ \ \bigcup  \bigcup_{B\not \subset A,\ B\in \mathfrak{p}^0_{\G min}}[B]_1^1.$$

If $\textbf{y}=(\gamma_1\ldots\gamma_n,A)\in\mathfrak{p}_\G \setminus \mathfrak{p}_{\G}^0$, then
$$D_{(\gamma_1\ldots\gamma_n,A),F}=\bigcup_{e \in \varepsilon(A)\setminus F}[\gamma_1\ldots\gamma_ne]_1^{n+1} \ \ \bigcup  \bigcup_{B\subset A,\ B\in \mathfrak{p}^0_{\G min}}[\gamma_1\ldots\gamma_nB]_1^{n+1}$$
and
%$$D_{(\gamma_1\ldots\gamma_n,A),F}^c=
%\bigcup_{(\alpha_1\ldots\alpha_n)\neq(\gamma_1\ldots\gamma_n)}[\alpha_1\ldots\alpha_n]_1^{n} \ \ \bigcup
%\bigcup_{e \in F\cup(r(\gamma_n)\setminus \varepsilon(A))}[\gamma_1\ldots\gamma_ne]_1^{n+1} \bigcup
%\bigcup_{B\not\subset A,\ B\in \mathfrak{p}^0_{\G min}}[\gamma_1\ldots\gamma_nB]_1^{n+1}.$$

$$D_{(\gamma_1\ldots\gamma_n,A),F}^c=
\bigcup[\alpha_1\ldots\alpha_n]_1^{n} \bigcup
[\gamma_1\ldots\gamma_ne]_1^{n+1} \bigcup
[\gamma_1\ldots\gamma_nB]_1^{n+1}, $$ where the unions in the right side range over $(\alpha_1\ldots\alpha_n)\neq(\gamma_1\ldots\gamma_n)$, $e \in F\cup(r(\gamma_n)\setminus \varepsilon(A))$, and $B\not\subset A,\ B\in \mathfrak{p}^0_{\G min}$, respectively.

\end{proof}

Following the same outline of the proof of Proposition~3.6 in \cite{GSS1}, one can prove that:

\begin{prop}\label{prop:finite_union_fin_def_sets} Finite unions and finite intersections of finitely defined sets are also finitely defined.
\end{prop}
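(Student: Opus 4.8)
The plan is to establish the statement for unions and intersections separately, and in each case to reduce immediately to the two-set case, since an easy induction then extends the result to any finite collection. Suppose $C$ and $C'$ are finitely defined, so that we may write $C=\bigcup_{i\in I}[b^i]_{k_i}^{\ell_i}$, $C^c=\bigcup_{j\in J}[d^j]_{m_j}^{n_j}$, $C'=\bigcup_{i'\in I'}[b^{i'}]_{k_{i'}}^{\ell_{i'}}$, and $C'^c=\bigcup_{j'\in J'}[d^{j'}]_{m_{j'}}^{n_{j'}}$.

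First I would handle the union. Writing $C\cup C'$ as a union of pseudo cylinders is immediate: just take the union of the two given collections indexed by $I\sqcup I'$. The work is in the complement, where I would use De Morgan to write $(C\cup C')^c=C^c\cap C'^c=\bigl(\bigcup_{j\in J}[d^j]_{m_j}^{n_j}\bigr)\cap\bigl(\bigcup_{j'\in J'}[d^{j'}]_{m_{j'}}^{n_{j'}}\bigr)=\bigcup_{(j,j')\in J\times J'}\bigl([d^j]_{m_j}^{n_j}\cap[d^{j'}]_{m_{j'}}^{n_{j'}}\bigr)$. Thus the key step is the \emph{intersection lemma}: the intersection of two pseudo cylinders is again a pseudo cylinder (possibly empty). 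Intuitively this holds because each pseudo cylinder fixes the values of a sequence on a finite window, so their intersection fixes the values on the overlapping union of windows; if the two prescriptions disagree where the windows overlap, the intersection is empty (and we have declared $\emptyset$ a pseudo cylinder), while if they agree, the intersection is the single pseudo cylinder specifying the merged block on the combined window. One must check that the merged block is actually admissible, i.e. lies in $B(X_\G)$, but this is automatic since the intersection is nonempty exactly when some $x\in X_\G$ realizes both constraints, and that $x$ witnesses the merged block as an admissible word. Once this intersection lemma is in hand, the union case is complete; and because $C\cap C'=(C^c\cup C'^c)^c$, the intersection case follows by applying the union result to the finitely defined sets $C^c$ and $C'^c$ (noting that $C$ is finitely defined iff $C^c$ is, by symmetry of the definition).

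The main obstacle I expect is the bookkeeping in the intersection lemma, specifically handling the relative positions of the two windows $[k,\ell]$ and $[m,n]$ and proving admissibility of the merged word. Concretely, given $[b]_k^\ell$ and $[d]_m^n$ with, say, $k\le m$, I would split into cases according to whether the windows are disjoint ($\ell<m$), in which case there is a gap and I cannot directly merge them into a single block $[\,\cdot\,]_k^{n}$ unless I can fill the gap — here I would instead argue that any $x$ in the intersection fills the gap with some admissible symbols, so the intersection is a union of pseudo cylinders over all admissible fillings, which suffices for finite definedness even if it is not literally a single pseudo cylinder; and the overlapping case ($m\le\ell$), where on the overlap $[m,\min(\ell,n)]$ the two blocks must agree symbol-by-symbol for the intersection to be nonempty. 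Since the reference to Proposition~3.6 of \cite{GSS1} suggests the argument parallels the finite-alphabet (or OTW) case closely, I anticipate that the only genuinely new point to verify is that admissibility in the ultragraph setting — where symbols are edges together with minimal infinite emitters and concatenation is governed by the source/range conditions $s(\gamma_{i+1})\in r(\gamma_i)$ — poses no additional difficulty, which it does not, because admissibility is again witnessed by any realizing sequence $x\in X_\G$.
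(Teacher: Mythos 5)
Your argument is correct and follows the same outline as the proof the paper defers to (Proposition~3.6 of \cite{GSS1}): reduce to two sets, note that unions of pseudo cylinders are closed under finite intersection because the intersection of two pseudo cylinders is either empty, a single merged pseudo cylinder (overlapping windows), or a union of pseudo cylinders over admissible gap-fillings (disjoint windows), then conclude via De Morgan and the symmetry of the definition under complementation. Your handling of the two delicate points --- admissibility of the merged block being witnessed by any realizing sequence, and the gap case requiring a union rather than a single pseudo cylinder --- is exactly right, so there is nothing to add.
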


\begin{rmk} In general infinite unions or intersections of finitely defined are not finitely defined sets. Thus infinite unions of generalized cylinders need not be finitely defined sets.
\end{rmk}

%The next lemma gives sufficient conditions under which infinite unions of pseudo cylinders (and thus of finitely defined sets) are finitely defined sets.

%\begin{lem}\label{lem:countable_union_fin_def_sets} Let $V,W\subset X_\G$ be such that both $V$ and $W$ are countable unions of pseudo cylinders, and $V%\cup W$ is finitely defined set. Then both $V$ and $W$ are finitely defined sets.
%\end{lem}

%\begin{proof}
%We will only to prove that $V$ is a finitely defined set, since the proof for $W$ is analogous. For this, we just need to prove that $V^c$ is a union of pseudo cylinders. Note that $$V^c=(V\cup W)^c\cup (W\setminus V).$$

%From the hypotheses of the lemma, $V\cup W$ is a finitely defined set, which implies that $(V\cup W)^c$ is a union of pseudo cylinders, and $W$ is a union of pseudo cylinders. Hence, from Proposition \ref{prop:dif_unions_pseudo_cylinders} it follows that $W\setminus V$ is also a union of pseudo cylinders, which concludes the proof.

%\end{proof}

% To better situate the reader in the discussion of the remark above, we present a couple of examples, of finitely
% defined sets below.
%

%
%

Now that we have a good understanding of finitely defined sets we can define generalized sliding block codes.

\begin{defn}\label{defn:sliding block code}
Let $\G$ and $\h$ be two ultragraphs and let $X_\G$ and $X_\h$ be the associated ultragraph shift spaces, respectively. We say that a map $\Phi:X_\G\to X_\h$ is a {\em generalized sliding block code} if for all $x\in X_\G$ and $n\geq 1$ it follows that
\begin{equation*}\bigl(\Phi(x)\bigr)_n=\sum_{a\in \A_{\h}}a\mathbf{1}_{C_a}\circ\sigma^{n-1}(x), \end{equation*}
where $\{C_a\}_{a\in \A_\h}$ is a partition of $X_\G$ by finitely defined sets.
\end{defn}

Note that, from Proposition \ref{prop:shift_invariant_maps}, generalized sliding block codes are shift commuting maps. Notice also that a generalized sliding block code can be interpreted as a map with a (possible unbounded) local rule, that is, a map such that to determine $\big(\Phi(x)\big)_j$ one just need to know the configuration of $x$ in a finite window around $x_j$ (but this window can vary). When the local rule is bounded, in the sense that the window around $x_j$ is always the same, the classical notion of sliding block codes is recovered. %In terms of the family $\{C_a\}_{a\in \A_\h}$ that define $\Phi$, to be a sliding block code means that there exist $L\geq 1$ such that each finitely defined set $C_a$ is the union of pseudo cylinder $[b_1\ldots b_n]_1^\ell$ with $\ell\leq L$.\\

We also remark that in the case of classical shift spaces over a finite alphabet, generalized sliding block codes are the same as classical sliding block codes (see \cite{GS}) and hence they coincide with the continuous shift commuting maps (see \cite{LindMarcus}). In the case of shift spaces over an infinite alphabet, with the product topology, generalized sliding block codes also always coincide with the continuous shift commuting maps \cite{GS}. %, and they will be classical sliding block codes if and only if they are uniformly continuous (see\cite{Ceccherini-Silberstein--Coornaert}).
For Ott-Tomforde-Willis shift spaces, it was showed in \cite{GSS0} that there exists generalized sliding block codes that are not continuous, and sufficient and necessary condition under which generalized sliding block codes coincide with continuous shift-invariant maps were presented. In our setting, if $X_\G^{fin} = \emptyset$ (for example, in the case of a row finite graph), then the topology on $X_\G$ coincides with the product topology and hence generalized sliding block codes coincide with continuous, shift commuting maps (as in \cite{GS}). In the next section we characterize continuous, shift commuting maps in $X_\G$.

\subsection{Continuous shift commuting maps and generalized sliding block codes}\label{CHL-Theo}

In this section we study continuous, shift commuting maps and their connection with generalized sliding block codes. We start by proving a result regarding continuity of shift commuting maps on $X_\G^{\inf}$ (for which we need the following lemma).

\begin{lem}\label{fdcontainscyl} Let $C$ be a finitely defined set in $X_\G$. If $x\in C\cap X_\G^{inf}$ then there exists a generalized cylinder $D$ such that $x\in D$ and $D\subseteq C$.
\end{lem}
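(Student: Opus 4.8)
The plan is to use only the easier half of the hypothesis, namely that a finitely defined set $C$ is a union of pseudo cylinders (the description of $C^c$ will not be needed). Since $x\in C$, I would first extract a single pseudo cylinder witnessing membership: there is a block $b\in B(X_\G)$ and indices $1\le k\le \ell$ with $x\in [b]_k^\ell\subseteq C$, the inclusion being automatic because $[b]_k^\ell$ is one of the sets in the union defining $C$. Writing $x=\gamma_1\gamma_2\ldots\in X_\G^{inf}$, the pseudo cylinder condition just records $\gamma_k\ldots\gamma_\ell=b$, and note every $\gamma_i$ is an edge, so $b$ is a block of edges.

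Next I would manufacture a generalized cylinder from a sufficiently long prefix of $x$. Because $x$ is an infinite path, the pair $\mathbf{y}:=(\gamma_1\ldots\gamma_\ell,r(\gamma_\ell))$ is a well-defined element of $\mathfrak{p}_\G$ of length $\ell$, and it is a prefix of $x$: indeed $x=\mathbf{y}\,(\gamma_{\ell+1}\gamma_{\ell+2}\ldots)$ in the sense of the concatenation rules, using that $s(\gamma_{\ell+1})\in r(\gamma_\ell)=r(\mathbf{y})$. I would then take $D:=D_{\mathbf{y}}=D_{\mathbf{y},\emptyset}$, a genuine generalized cylinder, and observe $x\in D$ immediately. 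It then remains to verify $D\subseteq [b]_k^\ell$: if $z\in D$ then $\mathbf{y}$ is a prefix of $z$, so the first $\ell$ coordinates of $z$ are forced to equal $\gamma_1,\ldots,\gamma_\ell$ whether $z$ is finite or infinite; since $k\le\ell=|\mathbf{y}|$ this gives $z_k\ldots z_\ell=\gamma_k\ldots\gamma_\ell=b$, i.e. $z\in[b]_k^\ell$. Chaining $D\subseteq[b]_k^\ell\subseteq C$ finishes the argument.

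The step I expect to be the crux is the containment $D\subseteq[b]_k^\ell$, and the reason it succeeds is precisely the hypothesis $x\in X_\G^{inf}$. Infiniteness is what allows me to take a prefix of length $\ell$ that covers the entire window $\{k,\ldots,\ell\}$ of the block; every point of the resulting generalized cylinder must then agree with $x$ on those first $\ell$ coordinates. This would genuinely fail for a finite sequence $(\gamma_1\ldots\gamma_m BB\ldots)$, whose generalized-cylinder neighbourhoods $D_{(\gamma_1\ldots\gamma_m,B),F}$ contain sequences that leave the set $B$ after position $m$ and so need not respect a pseudo cylinder constraining coordinate $m+1$ to be $B$; this is exactly why the lemma is restricted to $X_\G^{inf}$.

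The only fine point I would be careful to check is the boundary behaviour in the containment argument, namely that finite sequences $z\in D$ still satisfy $z_k\ldots z_\ell=b$. But since $k\le\ell\le|\mathbf{y}|$, the coordinates indexed by $k,\ldots,\ell$ lie inside the common edge part $\gamma_1\ldots\gamma_\ell$ shared by every element of $D$, so the constant tail of a finite sequence in $D$ never interferes and no difficulty arises. I would therefore expect the whole proof to be short, with the conceptual content concentrated entirely in choosing the prefix length to be at least $\ell$ and in invoking infiniteness to guarantee $\mathbf{y}$ is an honest prefix.
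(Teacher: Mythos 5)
Your proof is correct and follows essentially the same route as the paper: both extract from the union defining $C$ a pseudo cylinder $[x_k\ldots x_\ell]_k^\ell$ of edges containing $x$, and then pass to a generalized cylinder inside it. The only difference is that the paper asserts generically that such edge-only pseudo cylinders are unions of generalized cylinders, while you explicitly exhibit the cylinder $D_{(\gamma_1\ldots\gamma_\ell,\, r(\gamma_\ell))}$ around $x$ and verify the two inclusions, which is just a more detailed rendering of the same argument.
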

\begin{proof}

Let $x\in C\cap X_\G^{inf}$, where $C$ is finitely defined in $X_\G$. Since $|x|=\infty$, $C$ must contain a pseudo cylinder of the form $[x_k\ldots x_l]_k^l$, where $x_j \in \G^1$ for $j=l,\ldots, k$. But pseudo cylinders of the aforementioned type can be written as an union of generalized cylinders and hence the result follows.
\end{proof}

\begin{prop}\label{continfity} Let $\Phi:X_\G\to X_\h$ be a shift commuting map, characterized in terms of partitions $\big\{C_a\big\}_{a\in\A_{\h}}$, as in Proposition~\ref{prop:shift_invariant_maps}. Suppose that $\Phi$ is continuous on $X^{inf}_{\G}\cap \Phi^{-1}(X^0_{\h})$. Furthermore, suppose that for all $a\in \A_{\h}\setminus \mathfrak{p}^0_{\h min}$, and all $x\in C_a \cap X^{inf}_{\G}$, there exists a cylinder $D$ such that $x\in D \subseteq C_a$. Then $\phi$ is continuous on $X^{inf}_{\G}$.
\end{prop}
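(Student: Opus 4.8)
The plan is to prove continuity at each point $x\in X_\G^{inf}$ separately, splitting according to the length of its image. Since $X_\G$ is metrizable it suffices to argue with sequences, using the convergence criteria of Proposition~\ref{convseq}, though for the infinite-image case it is cleaner to exhibit a neighbourhood directly. Write $b_n:=(\Phi(x))_n$, so that $\sigma^{n-1}(x)\in C_{b_n}$ for every $n$. By the structure of $X_\h$, either $\Phi(x)\in X_\h^0$, or $\Phi(x)$ has length $k$ (finite positive or infinite) with $b_n\in\h^1$ for $1\le n\le k$ and $b_n=B\in\mathfrak{p}^0_{\h min}$ for $n>k$ (the consistency of this tail being guaranteed by Corollary~\ref{cor:maps_between_ultragraph_shifts->C_A_are_shift_invariant}). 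If $\Phi(x)\in X_\h^0$, then $x\in X_\G^{inf}\cap\Phi^{-1}(X_\h^0)$ and continuity at $x$ is exactly the first hypothesis, so from now on I assume $\Phi(x)\notin X_\h^0$.

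The common step is to control the edge coordinates of $\Phi$ on a cylinder around $x$. For each $n$ with $1\le n\le k$ we have $b_n\in\h^1=\A_\h\setminus\mathfrak{p}^0_{\h min}$ and $\sigma^{n-1}(x)\in C_{b_n}\cap X_\G^{inf}$, so the second hypothesis yields a cylinder $D\ni\sigma^{n-1}(x)$ with $D\subseteq C_{b_n}$; since $\sigma^{n-1}(x)$ is infinite, the sets $D_{(x_n\ldots x_{n+l},\,r(x_{n+l}))}$ form a neighbourhood basis at it, so I may take $D=D_{(x_n\ldots x_{n+l_n},\,r(x_{n+l_n}))}$ for some $l_n$. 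In the infinite-image case, fix $M$ and put $L=\max_{1\le n\le M}(n+l_n)$; for every $x'\in D_{(x_1\ldots x_L,\,r(x_L))}$ and every $n\le M$ the word $x_n\ldots x_{n+l_n}$ is a prefix of $\sigma^{n-1}(x')$, whence $\sigma^{n-1}(x')\in C_{b_n}$ and $(\Phi(x'))_n=b_n\in\h^1$. As these first $M$ coordinates are edges, $b_1\ldots b_M$ is a prefix of $\Phi(x')$, i.e. $\Phi\bigl(D_{(x_1\ldots x_L,\,r(x_L))}\bigr)\subseteq D_{(b_1\ldots b_M,\,r(b_M))}$, and since $M$ was arbitrary this gives continuity at $x$.

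It remains to treat the case $\Phi(x)=(b_1\ldots b_k,B)$ with $1\le k<\infty$ and $B\in\mathfrak{p}^0_{\h min}$, which I expect to be the main obstacle, since here we must pin down the finite edge-part and the length-zero tail simultaneously while the target is a finite-length point (so Proposition~\ref{convseq}(b) must be invoked). The key observation is that $\sigma^k(x)\in X_\G^{inf}$ and $\Phi(\sigma^k(x))=\sigma^k(\Phi(x))=(BB\ldots)\in X_\h^0$, so $\sigma^k(x)\in X_\G^{inf}\cap\Phi^{-1}(X_\h^0)$ and the first hypothesis makes $\Phi$ continuous there. Now take any sequence $x^{(m)}\to x$. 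Because $\sigma^k$ is continuous at the infinite point $x$ (Proposition~\ref{Rmk:shift_invariance_cylinders}) and $\Phi$ is continuous at $\sigma^k(x)$, shift commutativity gives $\sigma^k(\Phi(x^{(m)}))=\Phi(\sigma^k(x^{(m)}))\to(BB\ldots)$; by Proposition~\ref{convseq}(b) this says that, for each finite $F\subseteq\varepsilon(B)$, eventually the coordinates of $\Phi(x^{(m)})$ from position $k+1$ on either form $(BB\ldots)$ or satisfy $(\Phi(x^{(m)}))_{k+1}\in\varepsilon(B)\setminus F$ with length exceeding $k$. On the other hand, applying the common step with $L=\max_{1\le n\le k}(n+l_n)$ gives, for all large $m$, $(\Phi(x^{(m)}))_n=b_n$ for $1\le n\le k$. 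Combining these two facts is precisely the criterion of Proposition~\ref{convseq}(b) for $\Phi(x^{(m)})\to(b_1\ldots b_k,B)=\Phi(x)$, completing the proof.
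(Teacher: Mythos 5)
Your proposal is correct and follows essentially the same route as the paper's proof: in the infinite-image case you control each coordinate $(\Phi(x))_n$ via the hypothesized cylinders inside $C_{b_n}$, and in the finite-image case you control the first $k$ coordinates the same way and then invoke the assumed continuity at $\sigma^k(x)\in X_\G^{inf}\cap\Phi^{-1}(X_\h^0)$ together with shift commutativity. The only cosmetic differences are that you exhibit neighbourhoods directly where the paper argues with sequences, and you spell out the Proposition~\ref{convseq}(b) criterion that the paper leaves implicit.
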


%\begin{prop} Let $\Phi:X_\G\to X_\h$ be a shift commuting map, characterized in terms of partitions $\big\{C_a\big\}_{a\in\A_{\h}}$, as in Proposition~\ref{prop:shift_invariant_maps}. Suppose that $\Phi$ is continuous on every $x\in X^{inf}_{\G}$ such that $|\Phi(x)|=0$. Furthermore, suppose that for all $a\in \A_{\h}\setminus \mathfrak{p}^0_{\h min}$, and all $x\in C_a \cap X^{inf}_{\G}$, there exists a cylinder $D$ such that $x\in D \subseteq C_a$. Then $\phi$ is continuous on $X^{inf}_{\G}\cap Phi^{-1}($.
%\end{prop}
\begin{proof}

Let $x \in X^{inf}_{\G} \setminus \Phi^{-1}(X^0_{\h})$, say $x= \alpha_1 \alpha_2 \ldots$, and let $(x^n)$ be a sequence in $X_\G$ converging to $x$.

Suppose that $|\Phi(x)|= \infty$, say $\Phi(x)=\beta_1 \beta_2 \ldots$. Given $K>0$ we have to show that there exists $N>0$ such that $\Phi(x^n)_j = \Phi(x)_j$, for all $j=1,\ldots,K$, $n>N$.

Notice that, for $j=1,\ldots,K$, $\sigma^{j-1}(x) \in C_{\beta_j}\cap X^{inf}_{\G}$. Hence, by hypothesis, there exists a cylinder $D_j$ such that $\sigma^{j-1}(x) \in D_j \subseteq C_{\beta_j}$. Since $\sigma^j(x^n)$ converges to $\sigma^j(x)$, there exists $N_j$ such that $\sigma^{j-1}(x^n) \in C_{\beta_j}$ for all $n>N_j$. Therefore $\Phi(x^n)_j = \Phi(x)_j$ for all $j=1,\ldots,K$ and $n>\text{max}\{N_1 \ldots N_{K}\}$.

Now suppose that $|\Phi(x)| < \infty$, say $\Phi(x)=(\beta_1\ldots\beta_k BB\ldots) \in X_{\h}^{fin}$. Since $\s^{i-1}(x)\in C_{\beta_i}$ for each $i=1,\ldots,k$, there exists $N_1>0$ such that, for all $n>N_1$, we have $\s^{i-1}(x^n)\in C_{\beta_i}$. Hence $(\Phi(x^n))_i = \beta_i$ for all $i=1, \ldots, k$, and $n>N_1$. Notice that $\s^k(x)\in C_B\cap X^{inf}_{\G}$. Since $\s^k (x^n)$ converges to $\s^k (x)$, $\Phi$ is shift commuting, and by hypothesis $\Phi$ is continuous on $\s^k(x)$, it follows that $\Phi(x^n)$ converges to $\Phi(x)$.

\end{proof}

\begin{cor}\label{prop:sliding_bock_codes_continuous_on_infinity_sequences} If $\Phi:X_\G\to X_\h$ is a generalized sliding block code then it is shift commuting and continuous on $X_\G^{inf}$.
\end{cor}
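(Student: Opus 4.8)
The plan is to deduce both assertions almost entirely from results already in place, so that essentially no new computation is needed. That a generalized sliding block code is shift commuting is immediate: by Definition~\ref{defn:sliding block code} such a $\Phi$ is given precisely by formula~\eqref{eq:shift_invariant_maps} with $\{C_a\}_{a\in\A_\h}$ a partition of $X_\G$, so Proposition~\ref{prop:shift_invariant_maps} applies verbatim. The real content is therefore continuity on $X_\G^{inf}$, which I intend to obtain by verifying that $\Phi$ meets the two hypotheses of Proposition~\ref{continfity}.

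The cylinder hypothesis of Proposition~\ref{continfity} --- that for each $a\in\A_\h\setminus\mathfrak{p}^0_{\h min}$ and each $x\in C_a\cap X_\G^{inf}$ there is a generalized cylinder $D$ with $x\in D\subseteq C_a$ --- holds for free. Indeed, by definition every $C_a$ in the defining partition is finitely defined, so Lemma~\ref{fdcontainscyl} furnishes exactly such a $D$. Note this in fact works for \emph{all} $a\in\A_\h$, not merely for $a\notin\mathfrak{p}^0_{\h min}$, which I will exploit in the next step.

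The remaining hypothesis asks that $\Phi$ be continuous on $X_\G^{inf}\cap\Phi^{-1}(X_\h^0)$, and here I expect to prove the stronger statement that $\Phi$ is \emph{locally constant} at each such point. Fix $x\in X_\G^{inf}$ with $\Phi(x)\in X_\h^0$, say $\Phi(x)=(BB\ldots)$ with $B\in\mathfrak{p}^0_{\h min}$; then $x\in C_B$, and since $C_B$ is finitely defined Lemma~\ref{fdcontainscyl} provides a generalized cylinder $D$ with $x\in D\subseteq C_B$. The key observation is that for every $y\in D$ we have $\bigl(\Phi(y)\bigr)_1=B\in\mathfrak{p}^0_{\h min}$; since a sequence in $X_\h$ whose first coordinate is a length-zero symbol must be a $0$-sequence, this forces $\Phi(y)=(BB\ldots)=\Phi(x)$. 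Thus $\Phi$ equals the constant $\Phi(x)$ on the open set $D$, and is therefore continuous at $x$.

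Having verified both hypotheses, Proposition~\ref{continfity} then delivers continuity of $\Phi$ on $X_\G^{inf}$, completing the argument. The only genuinely non-formal point is the structural observation in the third paragraph --- that a minimal-infinite-emitter symbol in the first coordinate collapses the entire image to a constant $0$-sequence --- and this is where I anticipate needing care; the rest is a direct appeal to the machinery already established.
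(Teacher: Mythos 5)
Your proposal is correct and follows essentially the same route as the paper's proof: both deduce shift commutativity from Proposition~\ref{prop:shift_invariant_maps} and then verify the two hypotheses of Proposition~\ref{continfity} by applying Lemma~\ref{fdcontainscyl} to the finitely defined sets $C_a$, noting that on $X_\G^{inf}\cap\Phi^{-1}(X_\h^0)$ the map is locally constant. Your explicit justification of local constancy (that a length-zero symbol in the first coordinate forces the image to be the constant $0$-sequence) is the same observation the paper uses, stated there more tersely and already recorded in Corollary~\ref{cor:maps_between_ultragraph_shifts->C_A_are_shift_invariant}.
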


\begin{proof}

It follows from Proposition~\ref{prop:shift_invariant_maps} that $\Phi$ is shift commuting.

Let $\{C_a\}_{a\in \A_\h}$ be the partition that defines $\Phi$, as in Definition~\ref{defn:sliding block code}. Let $x\in X_\G^{inf}$. By Lemma~\ref{fdcontainscyl} if $x\in C_a$, for some $a\in \A_{\h}\setminus \mathfrak{p}^0_{\h min}$, there exists a cylinder $D_a$ such that $x\in D_a \subseteq C_a$. If $x\in \Phi^{-1}(X^0_{\h})$ then $x\in C_B$, for some $B \in \mathfrak{p}^0_{\h min}$. Since $C_B$ is a finitely defined set, Lemma~\ref{fdcontainscyl} implies again that there exists a cylinder $D$ such that $x\in D \subseteq C_B$. So $\Phi$ is locally constant in $x$ and hence continuous on $x$. Continuity of $\Phi$ on $X_\G^{inf}$ now follows from Proposition~\ref{continfity}.

\end{proof}

Next we characterize continuous shift commuting maps.

\begin{theo}\label{CSC}
Let $\Phi:X_{\GG}\rightarrow X_{\h}$ be a map. If $\Phi$ is continuous and shift commuting then $\Phi$ is a map given by $$\bigl(\Phi(x)\bigr)_n=\sum_{a\in \A_{\h} }a\mathbf{1}_{C_a}\circ\s^{n-1}(x), \text{ for all } n\geq 1,$$ where

\begin{enumerate}
\item $\{C_a\}_{a\in \A_{\h}}$ is a partition of $X_\G$ such that, for each $a\in  \A_{\h}\setminus \mathfrak{p}^0_{\h min}$, the set $C_a$ is a (possibly empty) union of generalized cylinders of $X_{\GG}$;

\item if %$\Phi((AA\ldots)) = (BB\ldots) \in X_\h^0$ for some $(AA\ldots) \in X^0_{\G}$, and
$\Phi(\alpha_1\ldots\alpha_k AA\ldots) = (\beta_1\ldots \beta_l BB\ldots)$, for some $(\alpha_1\ldots\alpha_k AA\ldots) \in X_{\G}^{fin}$ (in particular $l\leq k$ by Lemma~\ref{sliding block code->finite seq goes to finite seq}), then for every neighbourhood $D_{B,F}$ of $B$ there exists a cylinder $D_{(\s^{l}(\alpha_1\ldots\alpha_k),A),F'}$ such that $\Phi(D_{(\s^{l}(\alpha_1\ldots\alpha_k),A),F'})\subseteq D_{B,F}$;

\item if $|\Phi(AA\ldots)|>0$, say $\Phi(AA\ldots) = (ddd\ldots)$ for some $(AA\ldots) \in X_\G^0$, then for all $M>0$ there exists a cylinder $D_{A,F}$ such that $\s^i(D_{A,F}) \subseteq C_d$ for all $i=0, 1, \ldots, M$.

\end{enumerate}

Under the additional hypothesis that $\Phi$ is continuous on $X^{inf}_{\G}\cap \Phi^{-1}(X^0_{\h})$ the converse of the statement above also holds.
\end{theo}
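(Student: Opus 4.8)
The plan is to treat the two implications separately, moving freely between a shift commuting map and its defining partition $\{C_a\}_{a\in\A_\h}$ via Proposition~\ref{prop:shift_invariant_maps}, and translating every continuity statement into the explicit neighbourhood bases recalled after the definition of generalized cylinders, together with the convergence criterion of Proposition~\ref{convseq}.

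For the forward implication I would start from a continuous, shift commuting $\Phi$ and invoke Proposition~\ref{prop:shift_invariant_maps} to obtain the displayed formula with $C_a=\Phi^{-1}(D_{(a,r(a))})$ for $a\in\h^1$ and $C_A=\Phi^{-1}(\{A\})$ for $A\in\mathfrak{p}^0_{\h min}$. Condition (1) is then immediate: for $a\in\h^1$ the set $D_{(a,r(a))}$ is basic open, so $C_a$ is open by continuity and hence a union of generalized cylinders. For (3) I would apply continuity at the $0$-sequence $(AA\ldots)$, whose image is $(ddd\ldots)$ with $d\in\h^1$ by Corollary~\ref{sliding block code->emptysequence goes to constant}; feeding in the neighbourhood of $(ddd\ldots)$ that fixes the first $M+1$ coordinates yields a cylinder $D_{A,F}$ with $\Phi(D_{A,F})$ inside it, and unwinding the equivalence ``$(\Phi(x))_j=d$ iff $\s^{j-1}(x)\in C_d$'' gives $\s^i(D_{A,F})\subseteq C_d$ for $i=0,\dots,M$. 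For (2) I would first note that finiteness of $\Phi(\alpha_1\ldots\alpha_k AA\ldots)$ forces $|\Phi(AA\ldots)|=0$ (otherwise $\s^k(\Phi(\alpha_1\ldots\alpha_k AA\ldots))=\Phi(AA\ldots)$ would be infinite), so Lemma~\ref{sliding block code->finite seq goes to finite seq} gives $l\le k$; then $\s^l\Phi=\Phi\s^l$ shows $\Phi(\s^l(\alpha_1\ldots\alpha_k AA\ldots))=(BB\ldots)$, and condition (2) is precisely continuity of $\Phi$ at the finite sequence $\s^l(\alpha_1\ldots\alpha_k AA\ldots)$, read through its neighbourhood basis $\{D_{(\s^l(\alpha_1\ldots\alpha_k),A),F'}\}$ and that of $(BB\ldots)$.

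For the converse, shift commutativity comes for free from the converse half of Proposition~\ref{prop:shift_invariant_maps}, and continuity splits along $X_\G=X_\G^{inf}\cup X_\G^{fin}$. On $X_\G^{inf}$ I would apply Proposition~\ref{continfity}, whose first hypothesis is exactly the extra assumption of continuity on $X_\G^{inf}\cap\Phi^{-1}(X_\h^0)$ and whose second hypothesis follows from condition (1). For $X_\G^{fin}$ I would fix a finite sequence $x=(\alpha_1\ldots\alpha_k AA\ldots)$ and $x^n\to x$, and argue by cases on $\Phi(x)$. In every case the first coordinates are handled identically: $\s^{i-1}(x)\in C_{\beta_i}$ with $\beta_i\in\h^1$, and since $C_{\beta_i}$ is open (condition (1)) and $\s^{i-1}(x^n)\to\s^{i-1}(x)$, eventually $(\Phi(x^n))_i=\beta_i$. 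When $\Phi(x)$ is infinite the tail is constant equal to $d$ (where $\Phi(AA\ldots)=(ddd\ldots)$), and condition (3) applied to $\s^k(x^n)\to(AA\ldots)$ forces arbitrarily many tail coordinates to equal $d$; Proposition~\ref{convseq}(a) then gives $\Phi(x^n)\to\Phi(x)$. When $\Phi(x)$ is finite I recognize condition (2) as continuity of $\Phi$ at $\s^l(x)$, so $\s^l(\Phi(x^n))=\Phi(\s^l(x^n))\to(BB\ldots)$, and combining this with the first $l$ fixed coordinates and reading off Proposition~\ref{convseq}(b) yields $\Phi(x^n)\to\Phi(x)$.

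The hard part will be the finite-sequence continuity in the converse, and specifically the tail analysis when $\Phi(x)$ is finite. The delicate point is that the convergence criterion of Proposition~\ref{convseq}(b) permits two alternatives for each $n$ (either $\Phi(x^n)=\Phi(x)$, or a strictly longer sequence whose $(l+1)$-st symbol avoids a prescribed finite $F$), so one cannot naively match coordinates. The clean route is to read condition (2) literally as continuity of $\Phi$ at the shifted point $\s^l(x)$, push $x^n$ down by $\s^l$ (legitimate because $\s^l$ is continuous at $x$, as $\s^{l-1}(x)$ still has positive length when $l\le k$), and only then reassemble the fixed first $l$ coordinates with the convergent tail. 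The supporting bookkeeping, namely establishing $l\le k$ and the dichotomy that $\Phi$ of a finite sequence is finite exactly when $|\Phi(AA\ldots)|=0$, is what makes the case distinctions align.
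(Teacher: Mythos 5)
Your proposal is correct and follows essentially the same route as the paper: Proposition~\ref{prop:shift_invariant_maps} for the formula, openness of $\Phi^{-1}(D_{(a,r(a))})$ for item (1), continuity at the relevant $0$-sequence read through the neighbourhood bases for items (2) and (3), and for the converse Proposition~\ref{continfity} on $X_\G^{inf}$ plus the same case split on $X_\G^{fin}$ with the shift-and-reassemble argument for finite images. The only cosmetic difference is that you obtain item (2) by applying continuity directly at $\s^l(x)$, whereas the paper applies it at $x$ and then shifts; these are interchangeable via shift commutativity.
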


\begin{proof}

Let $\Phi:X_{\GG}\rightarrow X_{\h}$ be a continuous and shift commuting map.

From Proposition~\ref{prop:shift_invariant_maps} we have that, for all $x\in X_\G$ and $n\geq 1$, $$\bigl(\Phi(x)\bigr)_n=\sum_{a\in \A_\h}a\mathbf{1}_{C_a}\circ\s^{n-1}(x),$$ where for each $e \in \h^1$ we have $C_e := \Phi^{-1}(D_{(e,r(e))})$, and for each $B\in \mathfrak{p}^0_{\h min}$ we have $C_B= \Phi^{-1}(B)$.

Notice that $\{C_a\}_{a\in \A_{\h}}$ is a partition of $X_{\GG}$. Furthermore, notice that each $C_e$ is clopen and, since the generalized cylinders in $X_{\GG}$ form a countable basis, each $C_e$ can be written as a countable union of generalized cylinder sets. Therefore Item~i. is satisfied.

To check that Item~ii. holds notice that if $\Phi(\alpha_1\ldots \alpha_k A A \ldots) = (\beta_1\ldots \beta_l B B \ldots)$ then, by the continuity of $\Phi$, for every neighbourhood $D_{B,F}$ of $B$ there exists a cylinder $D_{(\alpha_1\ldots \alpha_k,A),F'}$ such that $\Phi (D_{(\alpha_1\ldots \alpha_k,A),F'}) \subseteq D_{(\beta,B),F}$. Hence, since $\Phi$ is shift commuting, we get that $\Phi(D_{(\s^{l}(\alpha_1\ldots \alpha_k),A),F'})\subseteq D_{B,F}$.

Next suppose that there exists  $(AA\ldots) \in X_{\G}^0$ such that $|\Phi(AA\ldots)|>0$, say $\Phi(AA\ldots) = (ddd\ldots)$. Notice that $|\Phi(AA\ldots)|=\infty$. Let $M>0$ and $\alpha = (d\ldots d)$ be a block of length $M+1$. Then $(AA\ldots)\in \Phi^{-1}(D_{(\alpha,r(d))})$ and $\Phi^{-1}(D_{(\alpha,r(d))})$ is open. Therefore there exists a cylinder $D_{A,F}\subseteq \Phi^{-1}(D_{(\alpha,r(d))})$. Let $1\leq i \leq M$ and $x\in D_{A,F}$. Then $\Phi (\s^i(x)) = \s^i(\Phi(x))$ and, since $\Phi(x)\in D_{(\alpha,r(d))}$, we get that $\s^i(x) \in C_d$ and Item~iii. is proved.

Now suppose that $\Phi$ is continuous on $X^{inf}_{\G}\cap \Phi^{-1}(X^0_{\h })$. Under this condition we show the converse of the theorem.

Assume that $\Phi$ is given by $\bigl(\Phi(x)\bigr)_n=\sum_{a\in A_{\GG_2} }a\mathbf{1}_{C_a}\circ\s^{n-1}(x)$, and Items i. to iii. above are satisfied. By Proposition~\ref{prop:shift_invariant_maps} we have that $\Phi$ is shift commuting. We prove that it is also continuous.

Notice that, by Proposition~\ref{continfity}, $\Phi$ is continuous on $X^{inf}_{\GG}$. Therefore we only need to show continuity on $X^{fin}_{\GG}$.

Let $(x^n)$ be a sequence in $X_{\GG}$ that converges to $x\in X^{fin}_{\GG}$. We divide the proof in two cases.

{\bf Case 1:} If $|x|=0$.

Then $x = (AA\ldots)$ for some $(AA\ldots)\in X_\G^0$. If $\Phi((AA\ldots)) = (BB\ldots)$ for some $(BB\ldots)\in X_\h^0$ then, by Item~ii., $\Phi(x^n)$ converges to $\Phi(x)$. Suppose that $\Phi((AA\ldots)) = ddd\ldots$, with $|\Phi((AA\ldots))|=\infty$. Let $M\in \N$. By Item~iii. there exists a cylinder $D_{A,F}$ such that $\s^i(D_{A,F}) \subseteq C_d$ for all $i=0, 1,\ldots, M-1$. Since $x_n$ converges to $A$, there exists $N>0$ such that $x_n \in D_{A,F}$ for every $n>N$. Therefore $\Phi(x_n)_i =d$ for all $i=1,\ldots, M$ and hence $\Phi(x^n)$ converges to $\Phi(x)$.

{\bf Case 2:} If $0<|x|<\infty$, say $x= \alpha_1\ldots \alpha_k A A \ldots$.

 By the description of converge of sequences in $X_{\GG_1}$ we may assume, without loss of generality, that $|x^n|\geq k$ for all $n$.

  Suppose that $\Phi((AA\ldots)) = (B'B'\ldots)$, where $(B'B'\ldots) \in X_\h^0$. By Lemma~\ref{sliding block code->finite seq goes to finite seq} we have that $\Phi(x) = (\beta_1 \ldots \beta_l BB\ldots)$, where $l\leq k$. %By Corollary~\ref{sliding block code->emptysequence goes to constant} we have that $\Phi(A)$ is a constant sequence.
Notice that $\Phi((AA\ldots)) = \phi( \s^{k}(x)) = \s^{k} (\Phi(x)) = (BB\ldots)$ and hence $B= B'$. Fix a natural number $j$ such that $1\leq j \leq l$. Note that $\sigma^{j-1}(x) \in C_{\beta_j}$, and hence there is a generalized cylinder $D_j$ such that $\sigma^{j-1}(x)\in D_j \subseteq C_{\beta_j}$. Since $(\sigma^{j-1}(x^n))$ converges to $\sigma^{j-1}(x)$, there exists an $N_j$ such that, for all $n>N_j$, $\sigma^{j-1}(x^n) \in D_j$ and hence $\bigl(\Phi(x^n)\bigr)_j=\beta_j$ (so the j-entry of $\Phi(x^n)$ is $\beta_j$). Now, let $D_{(\beta_1 \ldots \beta_l ,B),F}$ be a generalized cylinder set containing $(\beta_1 \ldots \beta_l, B)$. Then $D_{B,F}$ is a generalized cylinder set containing $B$. Pick a cylinder $D_{(\s^l(\alpha),A),F'}$ such that $\Phi(D_{(\s^l(\alpha),A),F'})\subseteq D_{B,F}$ (from item ii. of hypothesis). By Proposition~\ref{Rmk:shift_invariance_cylinders}, we have that $(\sigma^l(x^n))$ converges to $(\s^l(\alpha)AA\ldots)$ and hence there exists $N_{l+1}$ such that, for all $n> N_{l+1}$, $\sigma^l(x^n)\in D_{(\s^l(\alpha),A),F'} $. Taking $N$ as the maximum among $N_1, \ldots N_{l+1}$, and using Lemma~\ref{auxiliar}, we have that $\Phi(x^n) \in D_{(\beta_1 \ldots \beta_l ,B),F}$ for all $n>N$. Therefore $\Phi(x_n)$ converges to $\Phi(x)$.

Now suppose that $\Phi((AA\ldots)) = ddd\ldots$,  with $|\Phi((AA\ldots))| = \infty$ (so $d\in \h^1)$. By Lemma~\ref{auxiliar} we have that $\Phi(x) = \beta_1 \beta_2 \ldots $, where $\beta_i \in \h^1$ for $i=1.. |x|$, and $\beta_i = d \in \h^1$ for $i>|x|$. Notice that $\s^{j-1}(x) \in C_{\beta_j}$ for each $j \in \N$, and hence, by Item~i., there are generalized cylinders $D_j$ such that $\sigma^{j-1}(x)\in D_j \subseteq C_{\beta_j}$ for all $j\leq |x|$. Since $x^n$ converges to $x$ we have that $\s^j(x_n)$ converges to $\s^j(x)$ for all $j\leq |x|$. Therefore we can find $N_1$ such that, for all $n>N_1$ and for all $j=1, \ldots, |x|$, it holds that $\sigma^{j-1}(x^n) \in D_j$ and hence $\bigl(\Phi(x^n)\bigr)_j=\beta_j$. Let $M> |x|$. Take a cylinder $D_{A,F}$ as in Item~iii., that is, such that $\s^i(D_{A,F}) \subseteq C_d$ for all $i=0, 1, \ldots, M-|x|$. Since $\s^{|x|}(x_n)$ converges to $\s^{|x|}(x) = (AA\ldots)$ we have that there exists $N>N_1$ such that, for all $n>N$, $\s^{|x|}(x_n)\in D_{A,F}$. Hence $\s^{i+|x|}(x_n)\in D_{A,F}$ for all $i=0, 1, \ldots, M-|x|$ and therefore  $\bigl(\Phi(x^n)\bigr)_j=\beta_j$ for $j=1\ldots M$. We conclude that $\Phi(x_n)$ converges to $\Phi(x)$.

\end{proof}

\begin{rmk} When $X^{inf}_{\G}\cap \Phi^{-1}(X^0_{\h})$ is empty, the above theorem is a complete characterization of shift commuting maps. This is the case of maps such that $\Phi(X_{\GG_1}^{inf}) \subseteq X_{\GG_2}^{inf}$ or that preserve length (when dealing with infinite alphabet shift spaces the hypothesis that shift commuting maps preserve length is common, see for example \cite{GR, GRultra, GRISU, Ott_et_Al2014}).
\end{rmk}

Next we connect shift commuting maps with generalized sliding block codes.

\begin{theo}\label{general-CHL-T}Let $X_\G$ and $X_\h$ be two ultragraph shift spaces.
Suppose that $\Phi:X_\G\to X_\h$ is a map such that for all $B\in\mathfrak{p}^0_{\h min}$ the set $C_B:=\Phi^{-1}(BBB\ldots)$ is a finitely defined set. Then $\Phi$ is continuous and shift commuting if, and only if, $\Phi$ is a generalized sliding block code given by $\bigl(\Phi(x)\bigr)_n=\sum_{a\in \A_\h}a\mathbf{1}_{C_a}\circ\s^{n-1}(x)$ where:
\begin{enumerate}
\item\label{theo:general-CHL-Theo:condition_1} For all $a\in \h^1$, the set $C_a$ is a (possibly empty) union of generalized cylinders of $X_\G$;

\item\label{theo:general-CHL-Theo:condition_2} If $(\bar x_1\ldots \bar x_{|\bar x|} AAA\ldots)\in X^{fin}_\G$ is such that $\Phi(\bar x_1\ldots \bar x_{|\bar x|} AAA\ldots)=(BBB\ldots)\in X_\h^0$, then:
 \begin{enumerate}[a.]
 \item There exists a finite subset $F \subseteq \varepsilon(A)$ such that, for all $e \in \varepsilon(A)\setminus F$, if $x\in X_\G$ satisfies $x_i=\bar x_i$ for all $i=1,\ldots,|\bar x|$, and $x_{|\bar x|+1}=e$, then $\big(\Phi(x)\big)_1=B$ or $(\Phi(x)\big)_{1}\in\varepsilon(B)$, i.e., $\Phi(x) \in D_{B}$;

\item For all $x\in X_\G$ with $x_i=\bar x_i$ for $i=1,\ldots,|\bar x|$, $x_{|\bar x|+1}\in\varepsilon(A)$, and $\big(\Phi(x)\big)_1\in\varepsilon(B)$, the set $$A_x:=\{g\in\varepsilon(A): \text{ there exists } y\in X_\G \text{ with } y_i=\bar x_i \text{ for } i=1,\ldots,|\bar x|, \ y_{|\bar x|+1}=g, \text{ and } \big(\Phi(y)\big)_1=\big(\Phi(x)\big)_1\}$$ is finite;
\end{enumerate}

\item\label{theo:general-CHL-Theo:condition_3} If $(AAA\ldots)\in X^0_\G$ is such that $\Phi(AAA\ldots)=(ddd\ldots)\in X_\h^{inf}$, then for all $M\geq 1$ there exists a cylinder $D_{A,F}$ such that $\s^i(D_{A,F}) \subseteq C_d$ for all $i=0, 1, \ldots, M$.

\end{enumerate}

\end{theo}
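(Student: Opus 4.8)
The plan is to deduce this characterization from Theorem~\ref{CSC}, reading the present statement as a reformulation of that theorem's conditions under the stronger standing hypothesis that every set $C_B=\Phi^{-1}(BBB\ldots)$ is finitely defined. I would organize the argument around two observations: first, that under this hypothesis $\Phi$ is a generalized sliding block code exactly when Item~i of Theorem~\ref{CSC} holds (this is what upgrades ``continuous shift commuting'' to ``generalized sliding block code''); and second, that parts (a) and (b) of Item~\ref{theo:general-CHL-Theo:condition_2} are precisely a combinatorial reformulation of Item~ii of Theorem~\ref{CSC}. Since Items~\ref{theo:general-CHL-Theo:condition_1} and \ref{theo:general-CHL-Theo:condition_3} coincide verbatim with Items~i and iii of Theorem~\ref{CSC}, both implications will then follow from the corresponding implications there.

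For the first observation, recall that a map given by the displayed formula is shift commuting by Proposition~\ref{prop:shift_invariant_maps}, with $C_e=\Phi^{-1}(D_{(e,r(e))})$ for $e\in\h^1$ and $C_B=\Phi^{-1}(BBB\ldots)$ for $B\in\mathfrak{p}^0_{\h min}$. To see that $\Phi$ is a generalized sliding block code I would verify that every $C_a$ is finitely defined. The sets $C_B$ are finitely defined by hypothesis, hence are unions of pseudo cylinders; and if Item~\ref{theo:general-CHL-Theo:condition_1} holds then each $C_e$ is a union of generalized cylinders, which are unions of pseudo cylinders by Proposition~\ref{lem:generalized_cylinders_are_finitely_defined}. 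Since the $\{C_a\}$ partition $X_\G$, the complement $C_e^c=\bigcup_{a\neq e}C_a$ is again a union of pseudo cylinders, so each $C_e$ is finitely defined, as required. This also supplies the missing ingredient for applying the converse half of Theorem~\ref{CSC}: once $\Phi$ is a generalized sliding block code, Corollary~\ref{prop:sliding_bock_codes_continuous_on_infinity_sequences} gives continuity on all of $X_\G^{inf}$, in particular on $X^{inf}_{\G}\cap\Phi^{-1}(X^0_{\h})$.

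The heart of the proof is the equivalence of Item~ii of Theorem~\ref{CSC} with parts (a)+(b) of Item~\ref{theo:general-CHL-Theo:condition_2}. Applying $\s^l$ and using shift commutation to strip the prefix $\beta_1\ldots\beta_l$, the relevant situation is $\Phi(\bar x AAA\ldots)=(BBB\ldots)$ with $\bar x=\s^l(\alpha_1\ldots\alpha_k)$, and Item~ii asks that for each finite $F\subseteq\varepsilon(B)$ there be a cylinder $D_{(\bar x,A),F'}$ with $\Phi(D_{(\bar x,A),F'})\subseteq D_{B,F}$. I would first unwind the target: since $|B|=0$, a point $z$ lies in $D_{B,F}$ iff $z=(BBB\ldots)$ or $z_1\in\varepsilon(B)\setminus F$. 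The center $(\bar x AAA\ldots)$ always maps into $D_{B,F}$; moreover, because $A$ is a minimal infinite emitter in $r(\bar x)$, the center is the unique length-$|\bar x|$ sequence of $D_{(\bar x,A),F'}$, so every non-central point is an edge-continuation $x_{|\bar x|+1}=e\in\varepsilon(A)\setminus F'$, and only these must be controlled. For (a)+(b)$\Rightarrow$Item~ii, given a finite $F\subseteq\varepsilon(B)$ I would take the finite set $F_0$ provided by (a) and enlarge it: for each $f\in F$ that is actually attained, fix a witness $x$ with $(\Phi(x))_1=f$ and adjoin the finite set $A_x$ of (b); the union over the finite set $F$ stays finite, yielding an $F'$ with $\Phi(D_{(\bar x,A),F'})\subseteq D_{B,F}$. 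For Item~ii$\Rightarrow$(a)+(b), taking $F=\emptyset$ (neighbourhood $D_B$) yields (a) directly, while (b) follows by contradiction: were some $A_x$ with $(\Phi(x))_1=f\in\varepsilon(B)$ infinite, Item~ii applied to $D_{B,\{f\}}$ would give a finite $F'$ with $\Phi(D_{(\bar x,A),F'})\subseteq D_{B,\{f\}}$; choosing $g\in A_x\setminus F'$ and a witness $y$ with $y_{|\bar x|+1}=g$ and $(\Phi(y))_1=f$, one gets $y\in D_{(\bar x,A),F'}$ yet $\Phi(y)\notin D_{B,\{f\}}$, a contradiction.

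Having matched Item~i with Item~\ref{theo:general-CHL-Theo:condition_1}, Item~iii with Item~\ref{theo:general-CHL-Theo:condition_3}, and Item~ii with (a)+(b), the forward direction follows by feeding continuity plus the finitely-defined hypothesis into Theorem~\ref{CSC} (and the observation above that $\Phi$ is then a generalized sliding block code), while the backward direction feeds Items~\ref{theo:general-CHL-Theo:condition_1}--\ref{theo:general-CHL-Theo:condition_3}, rewritten as Items~i--iii, together with the continuity on $X^{inf}_{\G}\cap\Phi^{-1}(X^0_{\h})$ from Corollary~\ref{prop:sliding_bock_codes_continuous_on_infinity_sequences}, into the converse half of Theorem~\ref{CSC}. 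The delicate point, which I expect to require the most care, is the bookkeeping in part (b): one must confirm that each of the finitely many ``forbidden'' target edges in $F$ pulls back to only finitely many continuations, which is exactly what guarantees that the enlargement of $F'$ remains finite and that the contradiction argument produces a genuine counterexample $g\in A_x\setminus F'$.
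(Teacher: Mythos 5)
Your proposal is correct and follows essentially the same route as the paper: reduce everything to Theorem~\ref{CSC}, use the hypothesis that the $C_B$ are finitely defined (together with Item~i) to conclude every $C_a$ and its complement are unions of pseudo cylinders, and translate Item~ii of Theorem~\ref{CSC} into conditions (a) and (b). The only local difference is that for (a)$+$(b) $\Rightarrow$ Item~ii you build the finite set $F'$ directly as $F_0\cup\bigcup_{f\in F}A_{x_f}$, whereas the paper runs the same counting idea as a proof by contradiction (an inductively chosen sequence of points plus a pigeonhole on the finite set $F''$ forcing some $A_x$ to be infinite); your constructive version is a clean equivalent.
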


\begin{proof}

Let $\Phi:X_\G\to X_\h$ be a map such that, for all $B\in\mathfrak{p}^0_{\h min}$, the set $C_B:=\Phi^{-1}(BBB\ldots)$ is a finitely defined set. Then, by Lemma~\ref{fdcontainscyl}, $\Phi$ is continuous on $X^{inf}_{\G}\cap \Phi^{-1}(\mathfrak{p}^0_{\h min})$ and hence both the forward implication and the converse of Theorem~\ref{CSC} are valid.

Suppose first that $\Phi$ is continuous and shift commuting. By Theorem~\ref{CSC}, $\Phi$ is given by $\bigl(\Phi(x)\bigr)_n=\sum_{a\in \A_\h}a\mathbf{1}_{C_a}\circ\s^{n-1}(x)$, where $\{C_a\}_{a\in \A_{\h}}$ is a partition of $X_\G$, and Items~i. and iii. above are satisfied. We need to check that $\Phi$ is a generalized sliding block code and Item~ii. above holds.

Notice that, for all $a\in\A_\h$, the sets $C_a$ and $C_a^c=\bigcup_ {b\in\A_\h\setminus\{a\}}C_b$ are unions of pseudo cylinders, which means that each $C_a$ is a finitely defined set. Hence $\Phi$ is a generalized sliding block code.

Next we check Item~ii.. Suppose that $\Phi(\bar x_1\ldots \bar x_{|\bar x|} AAA\ldots)=(BBB\ldots)\in X_\h^0$. Consider the cylinder $D_{B}$. By Theorem~\ref{CSC} (Item~ii.), there exists a cylinder $D_{(\bar x_1\ldots \bar x_{|\bar x|},A),F}$ such that $\Phi(D_{(\bar x_1\ldots \bar x_{|\bar x|},A),F})\subseteq D_{B}$. Then the finite set $F$ is such that Item~ii.a. is satisfied. To check Item~ii.b., let $x\in X_\G$ be such that $x_i=\bar x_i$ for $i=1,\ldots,|\bar x|$, $x_{|\bar x|+1}\in\varepsilon(A)$, and $\big(\Phi(x)\big)_1\in\varepsilon(B)$. Let $F=\{(\Phi(x))_1\}$. Then, by Theorem~\ref{CSC} (Item~ii.), there exists a cylinder $D_{(\bar x_1\ldots \bar x_{|\bar x|},A),F'}$  such that $\Phi(D_{(\bar x_1\ldots \bar x_{|\bar x|},A),F'})\subseteq D_{B,F}$. Hence $A_x \subseteq F'$.

For the converse, suppose that $\Phi$ is a generalized sliding block code given by $\bigl(\Phi(x)\bigr)_n=\sum_{a\in \A_\h}a\mathbf{1}_{C_a}\circ\s^{n-1}(x)$ satisfying Items~i., ii., and iii. above. All we need to do is verify Item~ii. in Theorem~\ref{CSC}.

Suppose that $\Phi(\alpha_1\ldots\alpha_k AA\ldots) = \beta_1\ldots \beta_l BB\ldots$, for some $\alpha_1\ldots\alpha_k AA\ldots \in X_{\G}^{fin}$. By Lemma~\ref{sliding block code->finite seq goes to finite seq} we have $l\leq k$. Then $\Phi(\s^l(\alpha_1\ldots\alpha_k AA\ldots)) = BB\ldots$. Denote $\s^l(\alpha_1\ldots\alpha_k AA\ldots)$ by $\bar x :=\bar x_1\ldots \bar x_{|\bar x|} AA\ldots$ (notice that $|\bar x|$ can be zero). Then $\Phi(\bar x_1\ldots \bar x_{|\bar x|} AA\ldots)=(BBB\ldots)$.

Suppose, by contradiction, that there exists a generalized cylinder $D_{B,F''}$ such that, for every generalized cylinder $D_{(\bar x_1\ldots \bar x_{|\bar x|},A),F'}$, we have that $\Phi(D_{(\bar x_1\ldots \bar x_{|\bar x|},A),F'})$ is not contained in $D_{B,F''}$.

Take $F$ as in Item~ii.a., so that $\Phi(D_{(\bar x_1\ldots \bar x_{|\bar x|},A),F})\subseteq D_{B}$. Let $x^1 \in D_{(\bar x_1\ldots \bar x_{|\bar x|},A),F}$ be such that $\Phi(x^1) \notin  D_{B,F''}$. Then $(\Phi(x^1))_{1} \in F''$. Let $D_2:= D_{(\bar x_1\ldots \bar x_{|\bar x|},A),F\cup \{(x^1)_{|\bar x| +1}\}}$, and $x^2\in D_2$ be such that  $\Phi(x^2) \notin  D_{B,F''}$ (so that $(\Phi(x^2))_{1} \in F''$). Let $D_3:= D_{(\bar x_1\ldots \bar x_{|\bar x|},A),F\cup \{(x^1)_{|\bar x| +1}, (x^2)_{|\bar x| +1}\}}$, and $x^3\in D_3$ be such that $(\Phi(x^3))_{1} \in F''$. Proceed by induction to define $x^n$, for all $n\in \N$.  Since $F''$ is finite, there exists $e\in F$ and, a subsequence $(x^{n_k})$, such that $(\Phi(x^{n_k}))_{1} = e$ for all $k$. Since the elements of $(x^n)$ are distinct this implies that $A_{x^{n_1}}$ is infinite, a contradiction. Hence Item~ii. in Theorem~\ref{CSC} is verified and it follows that $\Phi$ is continuous and shift commuting.

\end{proof}

As we mentioned before, when dealing with infinite alphabet shift spaces it is common to require that a continuous shift commuting map $\Phi:X_\G\to X_\h$ preserves length. The next corollary characterizes continuous, shift commuting, length-preserving maps.

\begin{cor}\label{length-preserving_SBC} A map $\Phi:X_\G\to X_\h$ is continuous, shift commuting, and preserves length, if and only if it is a generalized sliding block code given by $\bigl(\Phi(x)\bigr)_n=\sum_{a\in \A_\h}a\mathbf{1}_{C_a}\circ\s^{n-1}(x)$ where:
\begin{enumerate}
\item\label{cor:length-preserving_SBC:condition_1} For each $a\in \A_{\h}\setminus \mathfrak{p}^0_{\h min}$, the set $C_a$ is a (possibly empty) union of generalized cylinders of $X_\G$;

\item\label{cor:length-preserving_SBC:condition_2} $\displaystyle\bigcup_{B\in\mathfrak{p}^0_{\h min}}C_B=\mathfrak{p}^0_{\G min}$;

\item\label{cor:length-preserving_SBC:condition_3} If $\Phi(AAA\ldots) = (BBB\ldots) \in X_\h^0$ then:
 \begin{enumerate}[a.]
 \item There exists a finite subset $F \subseteq \varepsilon(A)$ such that, for all $e \in \varepsilon(A)\setminus F$, if $x\in X_\G$ and $x_{1}=e$, then $\big(\Phi(x)\big)_1=B$ or $(\Phi(x)\big)_{1}\in\varepsilon(B)$, i.e., $\Phi(x) \in D_{\color{blue}B}$;

\item For all $x\in X_\G$ with $x_{1}\in\varepsilon(A)$, and $\big(\Phi(x)\big)_1\in\varepsilon(B)$, the set $$A_x:=\{g\in\varepsilon(A): \text{ there exists } y\in X_\G \text{ with } y_{1}=g, \text{ and } \big(\Phi(y)\big)_1=\big(\Phi(x)\big)_1\}$$ is finite.
\end{enumerate}

\end{enumerate}
\end{cor}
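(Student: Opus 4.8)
The plan is to obtain this corollary as a specialization of Theorem~\ref{general-CHL-T} to the length-preserving case. Two observations drive the argument. First, for a shift commuting map $\Phi$ one has $\bigcup_{B\in\mathfrak{p}^0_{\h min}}C_B=\Phi^{-1}(X_\h^0)$, so that condition~\ref{cor:length-preserving_SBC:condition_2} is precisely the identity $\Phi^{-1}(X_\h^0)=X_\G^0$ (under the identification of $X_\G^0$ with $\mathfrak{p}^0_{\G min}$); I will show this identity is equivalent to length preservation. Second, once condition~\ref{cor:length-preserving_SBC:condition_2} holds each set $C_B$, for $B\in\mathfrak{p}^0_{\h min}$, is a subset of $X_\G^0$ and hence finitely defined by Example~\ref{finitexample}. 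This is exactly the standing hypothesis required to invoke Theorem~\ref{general-CHL-T}.

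First I would record the equivalence between length preservation and condition~\ref{cor:length-preserving_SBC:condition_2}. If $\Phi$ preserves length then $|x|=0$ forces $|\Phi(x)|=0$, while $\Phi(x)\in X_\h^0$ forces $|x|=0$; hence $\Phi^{-1}(X_\h^0)=X_\G^0$. Conversely, assume $\Phi$ is shift commuting with $\Phi^{-1}(X_\h^0)=X_\G^0$. For $x$ with $|x|=n<\infty$ we have $\sigma^n(x)\in X_\G^0$, so $\sigma^n(\Phi(x))=\Phi(\sigma^n(x))\in X_\h^0$ and thus $|\Phi(x)|\le n$; and if $|\Phi(x)|=m<n$ then $\Phi(\sigma^m(x))=\sigma^m(\Phi(x))\in X_\h^0$, so $\sigma^m(x)\in\Phi^{-1}(X_\h^0)=X_\G^0$, forcing $n=m$, a contradiction. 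The case $|x|=\infty$ is handled in the same way, so $\Phi$ preserves length.

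With this equivalence, both implications follow from Theorem~\ref{general-CHL-T}. For the forward implication, if $\Phi$ is continuous, shift commuting and length preserving, then condition~\ref{cor:length-preserving_SBC:condition_2} holds, each $C_B$ is finitely defined, and Theorem~\ref{general-CHL-T} produces a generalized sliding block code satisfying its three conditions. The first of these is condition~\ref{cor:length-preserving_SBC:condition_1} here. Since $\Phi$ preserves length, any equality $\Phi(\bar x_1\ldots\bar x_{|\bar x|}AAA\ldots)=(BBB\ldots)\in X_\h^0$ forces $|\bar x|=0$, so the second condition of Theorem~\ref{general-CHL-T} collapses to its $|\bar x|=0$ instance, which is exactly condition~\ref{cor:length-preserving_SBC:condition_3}; moreover the third condition of Theorem~\ref{general-CHL-T} is vacuous, since $\Phi(AAA\ldots)$ can never be infinite. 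For the converse, a generalized sliding block code is shift commuting, so condition~\ref{cor:length-preserving_SBC:condition_2} yields length preservation by the equivalence above; the same finitely-defined observation holds, the second condition of Theorem~\ref{general-CHL-T} is again only its $|\bar x|=0$ instance (namely condition~\ref{cor:length-preserving_SBC:condition_3}), and its third condition is vacuous, so the converse part of Theorem~\ref{general-CHL-T} applies and $\Phi$ is continuous and shift commuting.

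I expect the only point requiring care to be the order of the converse argument: one must first extract length preservation from shift commutation together with condition~\ref{cor:length-preserving_SBC:condition_2}, and only then use it to justify that the second condition of Theorem~\ref{general-CHL-T} need be verified solely for $|\bar x|=0$ and that its third condition is vacuous. Beyond this, the proof is a direct transcription of the conditions of Theorem~\ref{general-CHL-T} into the length-preserving setting.
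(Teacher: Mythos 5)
Your proposal is correct and follows essentially the same route as the paper: both derive the corollary from Theorem~\ref{general-CHL-T} by observing that condition~\ref{cor:length-preserving_SBC:condition_2} is equivalent (for shift commuting maps) to length preservation, that this makes each $C_B$ a subset of $X_\G^0$ and hence finitely defined via Example~\ref{finitexample}, and that length preservation collapses condition~ii of the theorem to its $|\bar x|=0$ instance and renders condition~iii vacuous. You merely spell out the equivalence between condition~\ref{cor:length-preserving_SBC:condition_2} and length preservation in more detail than the paper does.
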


%Before to prove the above corollary, let us notice that the condition \ref{cor:length-preserving_SBC:condition_2} means that for all $(AAA\ldots)\in X_\G^0$ we have $\Phi(AAA\ldots)=(BBB\ldots)\in X_\h^0$, and  that each empty (or non-empty?) $C_B$ is a pseudo cylinder of the form $[A]_0^0$ for some $(AAA\ldots)\in X_\G^0$. C_B pode ter mais que um elemento....pode ser A_1 uni\~{a}o A_2.

\begin{proof}%[Proof of Corollary \ref{length-preserving_SBC}]
Suppose that $\Phi$ is continuous, shift commuting and length preserving. By Proposition~\ref{prop:shift_invariant_maps} we have that $\Phi$ is given by $\bigl(\Phi(x)\bigr)_n=\sum_{a\in \A_\h}a\mathbf{1}_{C_a}\circ\s^{n-1}(x)$, where $\big\{C_a\big\}_{a\in\A_{\h}}$ is a partition of $X_\G$. Since $\Phi$ is length preserving Item~ii. above is satisfied. Furthermore, for all $B\in\mathfrak{p}^0_{\h min}$, the set $C_B:=\Phi^{-1}(BBB\ldots)$ is a countable union of elements of length zero in $X_\G$. By Example~\ref{finitexample} we have that $C_B$ is finitely defined. Items~i. and iii. now follow from Theorem~\ref{general-CHL-T}.

For the converse, let $\Phi$ be a generalized sliding block code such that Items~i. to iii. above hold. Notice that Item~ii. implies that $\Phi$ is length preserving and hence, for all $B\in\mathfrak{p}^0_{\h min}$, $\Phi^{-1}(B)$ is a finitely defined set. Now Items~i. and iii. above imply that all conditions of Theorem~~\ref{general-CHL-T} are satisfied and hence $\Phi$ is continuous and shift commuting.
\end{proof}

We end the paper presenting some examples.

\begin{ex} {\color{white}.}

\begin{enumerate}
\item[a)]
Let $\G$ be the graph with only one vertex, say $G^0:=\{w\}$, and edge set given by $\G^1:=\{d, f_1, f_2,\ldots\}$ (so all edges are loops). Let $\h$ be the graph with only one vertex, say $H^0:=\{v\}$, and edge set given by $\h^1:=\{e_1, e_2, \ldots\}$. It follows that the ultragraph shifts $X_\G$ and $X_\h$ have alphabets $\A_{\G}=\{A\}\cup\G^1$ with $A:=G^0$ and $\A_{\h}=\{B\}\cup\h^1$ with $B:=H^0$, respectively ($X_\G$ and $X_\h$ coincide with Ott-Tomforde-Willys full shifts).

  Let $C_B= [A]_1^1 \cup \{(ddd\ldots)\}$ and, for all $j$, let $C_{e_j}=[f_j]_1^1\cup[d f_j]_1^2 \cup [dd f_j]_1^3 \cup [ddd f_j]_1^4 \cup \ldots$. This partition of $X_\G$ defines a shift commuting map $\Phi$ given by $\bigl(\Phi(x)\bigr)_n=\sum_{a\in \A_\h}a\mathbf{1}_{C_a}\circ\s^{n-1}(x)$ which is not continuous (notice that $\Phi^{-1}(D_{B,\{e_1\}})$ is not open, since every open neighbourhood of $(ddd\ldots)$ contains elements of $C_{e_1}$). We remark that in this case $C_B$ is not finitely defined.

\item[b)] Let $\G$ be the graph with only one vertex, say $G^0:=\{w\}$, and edge set given by $\G^1:=\{0\}\cup\N$. Let $X_\G$ be the correspondent ultragraph shift (which, as before, has alphabet $\A_{\G}=\{A\}\cup\G^1$ with $A:=G^0$). Consider the map $\Phi:X_\G\to X_\G$ given, for all $x\in X_\G$ and $n\in\N$, by
    $$\big(\Phi(x)\big)_n=\left\{\begin{array}{lcl} x_n &\ if& x_n\neq0\ and\ x_n\neq A,\\
                                                    A  &\ if& x_n=A\ or\ x_{n+j}=0\ \forall j\geq 0,\\
                                                    k  &\ if& x_{n+j}=0\ for\ 0\leq j\leq k, \ and\ x_{n+k+1}\neq 0.
                                \end{array}\right.$$
    We have that $\Phi$ is continuous and shift commuting, but it is not a generalized sliding block code, since $C_A=[A]_1^1\cup\{(000\ldots)\}$ is not a finitely defined set.

\item[c)] In this example we consider again the ultragraph shifts of example $a)$. From Theorem \ref{general-CHL-T}, a map $\Phi:X_\G\to X_\h$, where $\Phi^{-1}(BBB\ldots)$ is a finitely defined set, is continuous and shift commuting if and only if: either $\Phi(AAA\ldots)=(BBB\ldots)$ and for all $a\in\h^1$ the set $C_a$ is a finite union of generalized cylinders; or $\Phi(AAA\ldots)=(e_je_je_j\ldots)$ for some $e_j\in\h^1$, there are just a finite number of nonempty sets $C_a$, and for all $M$ there exists a finite $F_M\subset \A_\G$ such that $\s^{n-1}(D_{A,F_M})\subset C_{e_j}$ for all $1\leq n\leq M$.

     Recall that $X_\G$ and $X_\h$ coincide with Ott-Tomforde-Willys full shifts, and therefore we can alternatively apply Theorems~3.16 and 3.17 of \cite{GSS0} to obtain the above result.

\item[d)]
In this example we use $\Z^*$ to denote the set of all non-zero integers. Let $\G$ be the ultragraph with vertex set $G^0:=\{v_k: k\geq 0 \}$, edge set $\G^1:=\{e_k: k\geq 0 \}$, and source $s_\G:\mathcal{G}^1 \to G^0$ and range $r_\G:\mathcal{G}^1 \to P(G^0)\setminus \{\emptyset\}$ maps given by
$$s_\G(e_k):=v_k,\qquad\forall k\geq 0,$$

and
$$r_\G(e_k):=\left\{\begin{array}{lcl}\{v_\ell:\ell\geq 0\}          &\ if& k=0,\\\\
                                  \{v_0,\ v_k\}                      &\ if& k\geq 1.
                \end{array}\right.$$

Note that the unique minimal infinite emitter of $\G$ is the set $A:=G^0$.

Let $\h$ be the ultragraph with vertex set $H^0:=\{w_k: k\in\Z^* \}$, edge set $\h^1:=\{f_k: k\in\Z^* \}$, and source $s_\h:\mathcal{H}^1 \to H^0$ and range $r_\h:\mathcal{H}^1 \to P(H^0)\setminus \{\emptyset\}$ maps given by
$$s_\h(f_k):=w_k \qquad\forall k\in\Z^*,$$
and
$$r_\h(f_k):=\left\{\begin{array}{lcl}\{w_{k+1}\}                       &\ if& k\leq -2,\\\\
                                  \{w_\ell:\ell\geq 1\}              &\ if& k= -1,\\\\
                                  \{w_k\}\cup\{w_\ell:\ell\leq -1\}  &\ if& k\geq 1.
                \end{array}\right.$$

We notice that the minimal infinite emitters of $\h$ are the sets $P:=\{w_\ell:\ell\leq -1\}$ and $Q:=\{w_\ell:\ell\geq 1\}$.

Now consider the map $\Phi:X_\G\to X_\h$ given, for all $x\in X_\G$ and $n\geq 1$, by

$$\big(\Phi(x)\big)_n=\left\{\begin{array}{lcl} P       &\ if& x_{n+j}=e_0\ \forall j\geq 0,\\\\
                                                f_{-k}  &\ if& x_{n+j}=e_0, \ 0\leq j\leq k-1, \text{ and } x_{n+k}\neq e_0,\\\\
                                                f_k     &\ if& x_n=e_k\ for\ k\neq 0,\\\\
                                                Q       &\ if& x_n=A.
                                \end{array}\right.$$

It follows that $\Phi$ is an invertible continuous and shift commuting map, but it is not a generalized sliding block code (since $C_P:=\{(e_0e_0e_0\ldots)\}$). On the other hand, $\Phi^{-1}$ is a generalized sliding block code.

\end{enumerate}

\end{ex}

%=======================================================================================================================

%=======================================================================================================================

\section*{Acknowledgments}

\noindent D. Gon\c{c}alves was partially supported by Conselho Nacional de Desenvolvimento Cient\'{\i}fico e Tecnol\'{o}gico -
CNPq.

\noindent M. Sobottka was supported by CNPq-Brazil PQ grant. Part of this work was carried out while the author was research visitor at Center for Mathematical Modeling, University of Chile (CNPq 54091/2017-6 and CMM CONICYT Basal program PFB 03).

%====================================================== BIBLIOGRAFIA =================================================================

\end{document}